\newtheorem{theorem}{Theorem}[section]
\newtheorem{lemma}[theorem]{Lemma}
\newtheorem{proposition}[theorem]{Proposition}
\newtheorem{cor}[theorem]{Corollary}
\newtheorem{conj}[theorem]{Conjecture}
\newcommand{\li}{\operatorname{li}}
\newcommand{\dist}{\operatorname{dist}}
\renewcommand{\Re}{\operatorname{Re}}
\newcommand{\Sclass}{{\mathcal{S}}}
\newcommand{\meas}[1]{\frac{1}{T}\operatorname{meas}\left\{\tau\in [0,T]: #1\right\}}
\begin{document}
\title[Selber's conjecture and joint universality]{Selberg's orthonormality conjecture and joint universality of L-functions}

\author{Yoonbok Lee}
\address{1. Department of Mathematics, 2. Research Institute of Natural Sciences, Incheon National University, Incheon, Korea}
\email{leeyb131@gmail.com, leeyb@inu.ac.kr}

\author{Takashi Nakamura}
\address{Department of Liberal Arts, Faculty of Science and Technology,
Tokyo University of Science,
2641 Yamazaki, Noda-shi, Chiba-ken, 278-8510, Japan}
\email{nakamuratakashi@rs.tus.ac.jp}

\author{{\L}ukasz Pa\'nkowski}
\address{Faculty of Mathematics and Computer Science, Adam Mickiewicz University, Umultowska 87, 61-614 Pozna\'{n}, Poland, and Graduate School of Mathematics, Nagoya University, Nagoya, 464-8602, Japan}
\email{lpan@amu.edu.pl}

\thanks{The second author was partially supported by JSPS Grant no. 24740029. The third author was partially supported by (JSPS) KAKENHI grant no. 26004317 and the grant no. 2013/11/B/ST1/02799 from the National Science Centre.}

\subjclass[2010]{Primary: 11M41}

\keywords{Selberg class, Selberg's conjecture, joint universality
}

\begin{abstract}
In the paper we introduce a new method how to use only an orthonormality relation of coefficients of Dirichlet series defining given L-functions from the Selberg class to prove joint universality.
\end{abstract}

\maketitle

\section{Introduction}

In 1975, Voronin \cite{V} discovered the so-called universality property, which is one of the most remarkable result concerning the value-distribution of $\zeta(s)$. The modern version states that for any continuous non-vanishing function $f(s)$ on a compact set with connected complement $K\subset\{s\in\mathbb{C}:1/2<\Re(s)<1\}$, analytic in the interior of $K$, we have
\[
\forall_{\varepsilon>0}\liminf_{T\to\infty}\meas{\max_{s\in K}|\zeta(s+i\tau)-f(s)|<\varepsilon}>0,
\]
where $\operatorname{meas}\{\cdot\}$ denotes the Lebesgue real measure. 

Voronin's universality theorem has been generalized for many zeta and $L$-functions from number theory. For example, a universality theorem is known for: Dirichlet $L$-functions (Voronin 1975), Dedekind zeta functions (Reich, 1980), Artin $L$-functions (Bauer 2003), $L$-functions associated with newforms (Laurin\v{c}ikas, Matsumoto and Steuding, 2003), and many others. A quite general class of universal $L$-functions with polynomial Euler product was introduced by Steuding in \cite{S}, and recently, his result was generalized by Nagoshi and Steuding in \cite{NS} to all $L$-functions from the Selberg class with coefficients $a(n)$ of Dirichlet series representation satisfying
\begin{equation}\label{eq:PNT}
\lim_{x\to\infty}\frac{1}{\pi(x)}\sum_{p\leq x}|a(p)|^2 = \kappa,
\end{equation}
for some positive constant $\kappa$ depending on $L$; here $\pi(x)$, as usual, counts the number of primes not exceeding $x$. 

Let us recall that the Selberg class $\Sclass$ consists of functions $L(s)$ defined by a Dirichlet series $\sum_{n=1}^\infty a_L(n)n^{-s}$ in the half-plane $\sigma:=\Re(s) > 1$ satisfying the following axioms:
\begin{enumerate}[(i)]
\item {\it Ramanujan hypothesis:} $a_L(n)\ll_\varepsilon n^\varepsilon$ for every $\varepsilon>0$;
\item {\it analytic continuation:} there exists a non-negative integer $m_L$ such that \break $(s-1)^{m_L}L(s)$ is an entire function of finite order;
\item {\it functional equation:} $L(s)$ satisfies the following functional equation
\[
\Lambda(s) = \theta\overline{\Lambda(1-\overline{s})},
\]
where
\[
\Lambda(s):=L(s)Q^s\prod_{j=1}^k\Gamma(\lambda_js+\mu_j),
\]
$|\theta|=1$, $Q,\lambda_j\in\mathbb{R}$, and $\mu_j\in\mathbb{C}$ with $\Re(\mu_j)\geq 0$;
\item {\it Euler product:} for $\sigma>1$ we have
\[
\log L(s) = \sum_p\sum_{k=1}^\infty\frac{b_L(p^k)}{p^{ks}},
\]
where $b_L(p^k)$ are complex numbers satisfying $b_L(p^k)\ll p^{k\theta}$ for some $\theta<1/2$. 
\end{enumerate}

The condition \eqref{eq:PNT} is closely related to the following widely believed Selberg conjecture.
\begin{conj}[Selberg]\label{conj:Selberg}
For any function $1\ne L\in \Sclass$ there is a positive integer $\kappa_L$ such that
\begin{equation}\label{eq:SelbergA}
\sum_{p\leq x}\frac{|a_L(p)|^2}{p} = \kappa_L\log\log x + R(x)
\end{equation}
and, for any primitive functions $L_1, L_2\in\Sclass$, we have 
\begin{equation}\label{eq:SelbergB}
\sum_{p\leq x}\frac{a_{L_1}(p)\overline{a_{L_2}(p)}}{p} = R(x),
\end{equation}
where $R(x)\ll 1$.
 
The last equation can be called the orthonormality relation.
\end{conj}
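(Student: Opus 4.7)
My plan would be to study the auxiliary Dirichlet series
\[
D_{L_1,L_2}(s) := \sum_{n=1}^{\infty}\frac{a_{L_1}(n)\overline{a_{L_2}(n)}}{n^s}, \qquad \Re(s)>1,
\]
and deduce both \eqref{eq:SelbergA} and \eqref{eq:SelbergB} from its analytic behaviour at $s=1$. Using axiom (iv) together with the bound $b_{L_j}(p^k)\ll p^{k\theta}$ with $\theta<1/2$, one writes
\[
\log D_{L_1,L_2}(s) = \sum_{p}\frac{a_{L_1}(p)\overline{a_{L_2}(p)}}{p^s} + E(s)
\]
for $\Re(s)>1$, where $E(s)$ extends to a function that remains bounded in a neighbourhood of $s=1$ (the contribution of prime powers $p^k$ with $k\geq 2$ is absolutely convergent there). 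If one can then show that $D_{L,L}(s)$ admits a meromorphic continuation past $\Re(s)=1$ with a pole of some integer order $\kappa_L\geq 1$ at $s=1$, while $D_{L_1,L_2}(s)$ is analytic and nonzero at $s=1$ whenever $L_1\neq L_2$ are primitive, then Landau's theorem for Dirichlet series with non-negative coefficients (in the diagonal case) together with partial summation delivers \eqref{eq:SelbergA} and \eqref{eq:SelbergB} with the required bounded remainder $R(x)\ll 1$.

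The classical way to realise this picture is to identify $D_{L_1,L_2}$, up to harmless local Euler factors, with a Rankin--Selberg convolution. For cuspidal automorphic representations $\pi_1,\pi_2$ of $\mathrm{GL}_{n_1},\mathrm{GL}_{n_2}$ over $\mathbb{Q}$, the L-function $L(s,\pi_1\times\widetilde{\pi}_2)$ has a meromorphic continuation and a pole at $s=1$ exactly when $\pi_1\cong\pi_2$, with order $1$; this is precisely the shape predicted by the conjecture, with $\kappa_L=1$ for primitives. I would therefore try to imitate the Rankin--Selberg construction using only the Selberg axioms: combine the functional equations of $L_1$ and $\overline{L}_2$ to produce a functional equation for $D_{L_1,L_2}$, and convert that into analytic continuation and pole information via a Mellin--Barnes integral representation.

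The hard part, and the reason this conjecture is expected to be genuinely deep, is that nothing in axioms (i)--(iv) supplies such a convolution. Given only the abstract data of two elements of $\Sclass$, there is no known mechanism to continue $D_{L_1,L_2}(s)$ past $\Re(s)=1$, let alone to detect whether two primitives are the ``same'' object. A complete proof would apparently have to pass through a Langlands-type functoriality statement identifying every primitive element of $\Sclass$ with a cuspidal automorphic representation on some $\mathrm{GL}_n$, at which point the Rankin--Selberg machinery applies. This is why the paper does not attempt Conjecture \ref{conj:Selberg} itself; rather, it takes the orthonormality relation as a hypothesis and extracts joint universality from it by other means.
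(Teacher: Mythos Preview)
You have correctly identified the essential point: the statement is a \emph{conjecture}, and the paper makes no attempt to prove it. There is no ``paper's own proof'' to compare against; Conjecture~\ref{conj:Selberg} is stated as background, and the paper's main theorem assumes a strengthened quantitative form of it (equations \eqref{eq:SelbergSA} and \eqref{eq:SelbergSB}) as a hypothesis. Your final paragraph states this accurately.

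A couple of small technical remarks on the heuristic sketch. Writing $\log D_{L_1,L_2}(s)$ presupposes that $D_{L_1,L_2}$ is nonvanishing near $s=1$; in the off-diagonal case there is no reason to expect this, and what one actually wants is simply that $D_{L_1,L_2}$ be \emph{holomorphic} at $s=1$, not that it be nonzero there. Also, ``Landau's theorem'' concerns the location of the first singularity on the real axis for Dirichlet series with non-negative coefficients, not the extraction of asymptotics; the passage from analytic information at $s=1$ to the partial sums would go through a Tauberian theorem of Delange type, and to obtain a genuinely bounded remainder $R(x)\ll 1$ rather than merely $R(x)=o(\log\log x)$ one needs analytic continuation slightly past the line $\Re(s)=1$. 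None of this affects your main conclusion, which is correct: the obstruction is the absence, within axioms (i)--(iv) alone, of any Rankin--Selberg-type construction for $D_{L_1,L_2}$, and this is precisely why the conjecture remains open and why the paper takes it as an assumption rather than a result.
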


It should be noted that it is expected that \eqref{eq:SelbergA} with $R(x)\ll 1$ is too weak to prove universality for a single $L$-function, because of lack of a sufficiently good error term (see \cite[the footnote on p. 129]{S}). Therefore, it is natural that \eqref{eq:PNT} is the stronger assumption than Selberg's conjecture. It implies that $R(x)\ll 1$, but to prove \eqref{eq:PNT} we need \eqref{eq:SelbergA} with 
\[
R(x)=C_1+\frac{C_2}{\log x} + O\left(\frac{1}{(\log x)^2}\right).
\]

Moreover, note that almost all known proofs of universality requires existing of the mean-square, which is rather difficult problem in the general setting of Selberg class. For example, the best known result (see \cite{Potter} or \cite[Corollary 6.11]{S}) says that, for $L\in\Sclass$, we have
\[
\lim_{T\to\infty}\frac{1}{2T}\int_{-T}^T |L(\sigma+it)|^2dt = \sum_{n=1}^\infty\frac{|a_L(n)|^2}{n^{2\sigma}}<\infty,\qquad\sigma>\max\left\{\frac{1}{2},1-\frac{1}{d_L}\right\},
\]
where $d_L$ denotes the degree of $L$ defined by $2\sum_{j=1}^k\lambda_j$, where $\lambda_j$'s are given by the functional equation of $L$. Therefore, it is natural that Nagoshi's and Steuding's universality theorem of $L$-function from the Selberg class was proved only in the strip $\{s\in\mathbb{C}:\sigma_{\textup m}(L)<\Re(s)<1\}$, where $\sigma_{\textup{m}}(L)$ denotes the abscissa of the mean-square half-plane for $L$.\pagebreak

Voronin in \cite{V2} (see also \cite[Chapter VII, Theorem 3.2.1]{KV}) proved also the so-called joint universality theorem for Dirichlet $L$-functions associated with pairwise non-equivalent Dirichlet characters. Roughly speaking, he proved that any collection of analytic non-vanishing functions $f_1,f_2,\ldots, f_n$ can be approximated, in the Voronin sense, by the shift $L(s+i\tau;\chi_1), L(s+i\tau;\chi_2),\ldots,L(s+i\tau;\chi_n)$, where $\chi_1,\ldots,\chi_n$ are pairwise non-equivalent Dirichlet characters. Joint universality was also proved for many other zeta and $L$-functions from number theory. However, it is still open problem put forward by Steuding in \cite{S}, whether the collection of $L$-functions from Selberg class is jointly universal under the assumption of Selberg's orthonormality conjecture \eqref{eq:SelbergB}. Obviously, to expect joint universality for at least two functions $L_1$ and $L_2$ we need some kind of their independence, so Selberg's conjecture seems to be the most natural assumption of this kind in the Selberg class. Interesting evidence for the truth of this conjecture was given by Bombieri and Hejhal in \cite{BH}, where they showed the statistical independence of any collection of $L$-functions under a stronger version of Selberg's conjecture. Moreover, it is known that Selberg's conjecture with $R(x)\ll 1$ is not sufficient to prove joint universality. The second author in \cite[Example 7.5]{N} observed that, for non-principle Dirichlet character $\chi$, the Dirichlet $L$-functions $L(s,\chi)$ and $L(s-i,\chi)$ cannot be jointly universal, whereas it is easy to observe that
\[
\sum_{p\leq x}\frac{\chi(p)\overline{\chi(p)p^{-i}}}{p}=\sum_{\substack{p\leq x\\\chi(p)\ne 0}}p^{i-1}\ll 1,
\]
so Selberg's conjecture with $R(x)\ll 1$ holds.

The main purpose of this paper is to introduce a new method how to use only orthonormality to prove joint universality of $L$-functions with Euler product. In order to illustrate this idea we prove a general joint universality theorem for any collection of $L$-functions $L_1,\ldots,L_m$ from the Selberg class satisfying some stronger analogue of Selberg's conjecture, namely
\begin{equation}\label{eq:SelbergSA}
\sum_{p\leq x}|a_{L_k}(p)|^2 = \sum_{j=1}^{2m+1}\frac{c^{(k)}_jx}{(\log x)^j} + O\left(\frac{x}{(\log x)^{2m+2}}\right)\qquad(1\leq k\leq m),
\end{equation}
and
\begin{equation}\label{eq:SelbergSB}
\sum_{p\leq x}a_{L_k}(p)\overline{a_{L_l}(p)} = \sum_{j=2}^{2m+1}\frac{c^{(k,l)}_j x}{(\log x)^j} + O\left(\frac{x}{(\log x)^{2m+2}}\right)\qquad (1\leq k\ne l\leq m),
\end{equation}
where $c^{(k)}_j, c^{(k,l)}_j$ are some constants and $c^{(k)}_1>0$.
It is easy to observe, by partial summation, that it is equivalent to the Selberg's conjecture \eqref{eq:SelbergA} and \eqref{eq:SelbergB}, where
\[
R(x) = \sum_{j=0}^{2m+2}\frac{c_j}{(\log x)^j} + O\left(\frac{1}{(\log x)^{2m+3}}\right)
\]
for suitable $c_j$ depending on given $L$-functions. 

Although the above formulas are obviously stronger than the original Selberg's conjecture, it is quite likely that they are fulfilled by all $L$-functions. We refer to Section \ref{sec:Ex} for a detailed discussion of this matter, where several unconditional joint universality theorems for automorphic $L$-functions are deduced from our method. Here we only mention that the evidence for the truth of this conjecture is the fact that there is a grand hypothesis that each $L$-function from Selberg class can be defined as a suitable automorphic $L$-function and, so far, all automorphic $L$-functions satisfying Selberg's conjecture fulfill in fact \eqref{eq:SelbergSA} and \eqref{eq:SelbergSB}.

\begin{theorem}\label{th:main}
Let $L_1,\ldots,L_m$ be elements of $\Sclass$, $K_1,\ldots,K_m\subset \{s\in\mathbb{C}:\break\max_{j=1,2,\ldots,m}{\sigma_{\textup{m}}(L_j)}<\Re s< 1\}$ be compact sets with connected complements and $g_j$, $j=1,\ldots,m$ be continuous non-vanishing function on $K_j$, and analytic in the interior of $K_j$. Then, if \eqref{eq:SelbergSA} and \eqref{eq:SelbergSB} hold, we have, for every $\varepsilon>0$, that
\[
\liminf_{T\to\infty}\meas{\max_{j=1,\ldots,m}\max_{s\in K_j}|L_j(s+i\tau)-g_j(s)|<\varepsilon}>0.
\]
\end{theorem}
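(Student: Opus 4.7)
My plan is to adapt the Bagchi--Voronin probabilistic scheme for universality, replacing the usual reliance on explicit local Euler factors by a denseness argument that extracts everything from the orthonormality relations \eqref{eq:SelbergSA}--\eqref{eq:SelbergSB}. First, on each $K_j$ I would approximate $L_j(s+i\tau)$ in mean square by the truncated Euler product
\[
L_{j,X}(s+i\tau) := \prod_{p\le X}\exp\!\left(\sum_{k=1}^\infty \frac{b_{L_j}(p^k)}{p^{k(s+i\tau)}}\right),
\]
valid in the strip $\sigma>\sigma_{\textup{m}}(L_j)$ by Potter's mean-square bound quoted in the introduction. By axiom (iv) the $k\ge 2$ terms in the exponent form an absolutely convergent correction, so up to a harmless error the original problem reduces to approximating $\log g_j$ uniformly on $K_j$ by the linear Dirichlet polynomial
\[
P_{j,X}(s,\underline\theta) = \sum_{p\le X}\frac{a_{L_j}(p)e^{-i\theta_p}}{p^s},\qquad \theta_p:=\tau\log p \bmod 2\pi.
\]

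Since $(\log p)_p$ is linearly independent over $\mathbb Q$, Weyl equidistribution applied to $\tau\mapsto(\tau\log p/2\pi)_{p\le X}$ guarantees that every prescription of phases is realised on a set of $\tau$ of positive lower density. The theorem therefore reduces to the following joint denseness claim: for every $\varepsilon>0$ and every admissible target $(h_1,\dots,h_m)$ there exist $X$ and phases $\theta_p$ so that $\sup_{s\in K_j}|P_{j,X}(s,\underline\theta)-h_j(s)|<\varepsilon$ for all $j$ simultaneously.

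I would establish this via a Pechersky-style rearrangement theorem in the Hilbert space $\mathcal H:=H_2(K_1)\oplus\cdots\oplus H_2(K_m)$, where $H_2(K_j)$ denotes a Bergman-type completion of the analytic functions on a neighbourhood of $K_j$. Setting $f_p(s):=(a_{L_1}(p)p^{-s},\dots,a_{L_m}(p)p^{-s})\in\mathcal H$, the Pechersky hypotheses amount to (a) the divergence $\sum_p\|f_p\|_{\mathcal H}^2=\infty$ and (b) an approximate orthogonality of the family $(f_p)_p$. Partial summation converts \eqref{eq:SelbergSA} into the correct divergence rate at $\sigma=1/2$ for the diagonal sums $\sum_p|a_{L_j}(p)|^2p^{-2\sigma}$, while \eqref{eq:SelbergSB} makes the cross sums $\sum_p a_{L_j}(p)\overline{a_{L_k}(p)}p^{-2\sigma}$ convergent; this decorrelation across coordinates is what produces joint, not merely componentwise, denseness.

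The main obstacle I anticipate is controlling block sums $\sum_{X_1<p\le X_2} e^{-i\theta_p}f_p$ in $\mathcal H$ uniformly in the phase assignment $(\theta_p)$, so that the Pechersky rearrangement can be executed within prescribed error tolerances on every dyadic scale. The weak form $R(x)\ll 1$ of Selberg's original conjecture only yields $O(1)$ control of partial sums, which is consistent with adversarial phase choices collapsing whole blocks of $f_p$'s into a low-dimensional subspace of $\mathcal H$; the example of $L(s,\chi)$ and $L(s-i,\chi)$ cited in the introduction shows that this level of control is genuinely insufficient. By contrast, the quantitative error $O(x/(\log x)^{2m+2})$ in \eqref{eq:SelbergSA}--\eqref{eq:SelbergSB} should give enough scale-by-scale orthogonality to run the rearrangement and, combined with Weyl equidistribution and the truncation estimate, yield a set of $\tau$ of positive lower density on which the desired simultaneous approximation holds.
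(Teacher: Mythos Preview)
Your overall framework---reduce to truncated Euler products, pass to phases via Weyl equidistribution, then prove joint denseness in a Hilbert space $\mathcal H$---is the same as the paper's (the paper packages the first two steps as a joint limit theorem on $H(D)^m$, but this is equivalent). The gap is in your statement of the Hilbert-space denseness criterion and in your plan to verify it. The relevant lemma (Steuding, Theorem~5.4, restated here as Lemma~\ref{lem:complexHilbert}) requires (i) $\sum_p\|f_p\|^2<\infty$, not $=\infty$ as you write---and indeed this sum converges because $\Re s>\sigma_1>1/2$ throughout the Bergman domain---and (ii) for every nonzero $e\in\mathcal H$ the series $\sum_p|\langle f_p,e\rangle|$ diverges. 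Your ``divergence at $\sigma=1/2$'' and ``approximate orthogonality of $(f_p)_p$'' are not these conditions, and your block sums $\sum_{X_1<p\le X_2}e^{-i\theta_p}f_p$ are not relevant to (ii), where no phases appear.

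Condition (ii) unpacks to $\sum_p\bigl|\sum_{j=1}^m a_{L_j}(p)\Delta_j(\log p)\bigr|=\infty$ with $\Delta_j(x)=\iint_U e^{-sx}\overline{g_j(s)}\,d\sigma\,dt$ for an arbitrary nonzero $(g_1,\dots,g_m)$, and this is where all the work lies. The $\Delta_j$ are entire, may decay like $e^{-\sigma_2 x}$, and oscillate, so one cannot simply sum over all primes: one must locate, for a sequence $x_k\to\infty$, short windows in $[x_k,x_k+1]$ of length $\asymp x_k^{-2m}$ on which every $\Delta_j(\log p)$ is essentially constant and at least one $\Delta_j$ is bounded below. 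The paper does this by approximating each $\Delta_j$ on $[x,x+1]$ by a polynomial of degree $O(x)$ and iterating Markov's inequality $m$ times to produce nested intervals $I_1\supset\cdots\supset I_m$ with the required control; then partial summation against \eqref{eq:SelbergSA}--\eqref{eq:SelbergSB} over $\log p\in I_m$ shows the diagonal contribution $\sum_p|a_{L_j}(p)|^2|\Delta_j|^2$ beats the cross terms by a factor of $\log x$---the exponent $2m+2$ in the hypotheses is exactly calibrated to survive the $x^{2m}$ loss from the interval length and derivative bounds. Your proposal contains neither this mechanism nor a substitute for it, and the vague ``scale-by-scale orthogonality'' does not supply one.
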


Noteworthy is the fact that most of proofs of universality rely on periodicity and orthonormality property of coefficients of $L$-functions. Recently, Mishou in \cite{M} invented a new approach to prove joint universality without periodicity, which works for a pair of $L$-functions with real coefficients under the assumption of some analogue of \eqref{eq:SelbergSA} and \eqref{eq:SelbergSB}. The purpose of this paper is to introduce another new approach how to use only orthonormality relation to prove joint universality for any collection of $L$-functions with complex coefficients. This method can be easily generalized to other zeta and $L$-functions, which joint universality property relies on some independence of coefficients of Dirichlet series representation. For example, in \cite{LNP} the authors proved joint universality for a collection of Lerch zeta functions $L(s;\alpha,\lambda_j) = \sum_{n=0}^\infty \frac{\exp(2\pi i\lambda_j)}{(n+\alpha)^s}$, $j=1,2,\ldots,m$, associated to transcendental $\alpha\in (0,1]$ and distinct $\lambda_j$'s with $\lambda_j\in (0,1]$.

As standard consequences of universality, one can easily prove the following corollaries. For the proofs we refer, for example, to \cite[Section 8]{M}, where Mishou showed similar results for a pair of $L$-functions. However, the modifications needed are straightforward and can be left to the reader (see \cite[Section 10]{S}).

\begin{cor}
Let $m\geq 2$, $0\ne a_1,\ldots,a_m\in\mathbb{C}$ and $L_1,\ldots,L_m\in\Sclass$ satisfy \eqref{eq:SelbergSA} and \eqref{eq:SelbergSB}. Then the function
\[
L(s) = \sum_{j=1}^m a_jL_j(s)
\]
is strongly universal in the strip $\max_{1\leq j\leq m}\sigma_{\textup m}(L_j)=:\sigma_L<\sigma<1$, which means that Theorem \ref{th:main} holds also for functions $g_j$ having zeros on $K_j$.

Moreover, the function $L(s)$ has infinitely many zeros in the strip $\sigma_L<\sigma<1$, namely for any $\sigma_1,\sigma_2$ with $\sigma_L<\sigma_1<\sigma_2<1$ and sufficiently large $T$ there exist $\gg T$ zeros $\rho=\beta+i\gamma$ of $L(s)$ in the rectangle $\sigma_1\leq \beta\leq\sigma_2$, $0\leq \gamma\leq T$.
\end{cor}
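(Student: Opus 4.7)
The proof splits into two parts. I will first establish the \emph{strong universality} of $L(s)$ by exploiting the extra degrees of freedom supplied by having $m\geq 2$ summands, and then deduce the zero-counting result via a Rouch\'e argument. For the strong universality, let $K$ be a compact set with connected complement in the strip $\sigma_L<\Re s<1$ and let $g$ be continuous on $K$, analytic in the interior, possibly vanishing. By Mergelyan's theorem I approximate $g$ on $K$ by a polynomial $p$ to within $\varepsilon/(2A)$, where $A:=\sum_{j=1}^m|a_j|$. Then I set $g_3\equiv\cdots\equiv g_m\equiv 1$, $g_2\equiv M$ with $M$ a positive constant to be chosen, and
\[
g_1(s):=\frac{1}{a_1}\Bigl(p(s)-a_2 M-\sum_{j=3}^m a_j\Bigr).
\]
For $M>(\sup_{s\in K}|p(s)|+\sum_{j\geq 3}|a_j|)/|a_2|$, the triangle inequality gives $|a_1 g_1(s)|\geq |a_2|M-\sup_{s\in K}|p(s)|-\sum_{j\geq 3}|a_j|>0$, so each $g_j$ is non-vanishing on $K$ and analytic in its interior. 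Theorem \ref{th:main} then supplies a set of $\tau\in[0,T]$ of positive lower density on which $\max_{s\in K}|L_j(s+i\tau)-g_j(s)|<\varepsilon/(2A)$ for every $j$, and since $\sum_{j=1}^m a_j g_j\equiv p$, a final application of the triangle inequality yields $\max_{s\in K}|L(s+i\tau)-g(s)|<\varepsilon$, as required.

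For the zero-counting half I apply the standard Rouch\'e argument (see \cite[\S 8]{M} or \cite[\S 10]{S}). Fix $\sigma_L<\sigma_1<\sigma_2<1$ and choose $s_0\in\mathbb{C}$ together with $r>0$ so that the closed disk $K_0:=\{|s-s_0|\leq r\}$ lies inside the open strip $\{\sigma_1<\Re s<\sigma_2\}$. Apply the strong universality established above with target $g(s):=s-s_0$ and threshold $r/2$: for some $\delta>0$ and all sufficiently large $T$, the set $E(T)$ of $\tau\in[0,T]$ with $\max_{s\in K_0}|L(s+i\tau)-(s-s_0)|<r/2$ has measure at least $\delta T$. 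For each such $\tau$, the inequality $|L(s+i\tau)-(s-s_0)|<r/2<r=|s-s_0|$ holds on $\partial K_0$, so Rouch\'e's theorem forces $L(s+i\tau)$ to have exactly one zero in the open disk, i.e.\ a zero $\rho=\beta+i\gamma$ of $L$ with $\sigma_1<\beta<\sigma_2$ and $|\gamma-\tau-\Im s_0|<r$.

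Finally, to convert the measure estimate into a count of distinct zeros, I extract a maximal subset $\{\tau_k\}\subset E(T)$ with consecutive gaps at least $2r+1$; a routine packing argument shows $\#\{\tau_k\}\gg T$, and the imaginary-part windows for $\gamma$ produced by different $\tau_k$ are disjoint, so the corresponding zeros of $L$ are distinct, giving the claimed $\gg T$ lower bound. The only genuinely delicate point is the non-vanishingness of the $g_j$'s in the first part; this is exactly where the hypothesis $m\geq 2$ is essential, since with only one summand there would be no freedom to offset the zeros of $g$, and indeed it is known that a single non-trivial $L\in\Sclass$ cannot be strongly universal.
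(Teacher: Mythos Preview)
Your argument is correct and follows exactly the standard route to which the paper defers (the paper gives no proof of this corollary, merely citing \cite[Section~8]{M} and \cite[Section~10]{S}): split the possibly-vanishing target into non-vanishing summands using the freedom afforded by $m\geq 2$, invoke Theorem~\ref{th:main}, and then run Rouch\'e with $g(s)=s-s_0$ together with a packing argument on the set of good~$\tau$. One trivial bookkeeping slip: with your choices the final triangle inequality yields $\varepsilon/2+\varepsilon/(2A)$, which is $<\varepsilon$ only when $A>1$; simply take the Mergelyan approximation to within $\varepsilon/2$ (or replace $A$ by $A+1$ throughout) and the argument goes through unchanged.
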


\begin{cor}
Let $N\in\mathbb{N}$, $L_1,\ldots,L_m\in\Sclass$ satisfy \eqref{eq:SelbergSA} and \eqref{eq:SelbergSB} and $\sigma_0$ be a real number satisfying $\max_{1\leq j\leq m}\sigma_{\textup m}(L_j)<\sigma_0<1$. Then the set
\[
\left\{\left(L_1(\sigma_0+it),\ldots,L_m(\sigma_0+it),\ldots,L^{(N-1)}_1(\sigma_0+it),\ldots,L^{(N-1)}_m(\sigma_0+it)\right):t\in\mathbb{R}\right\}
\]
is dense in $\mathbb{C}^{mN}$.
\end{cor}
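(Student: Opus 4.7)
The plan is to derive this density statement from Theorem \ref{th:main} (joint universality) by a standard two-step reduction: first, find non-vanishing analytic functions $g_j$ on a small disk about $\sigma_0$ whose first $N$ derivatives at $\sigma_0$ hit any prescribed point of $\mathbb{C}^{mN}$ up to a small error; second, convert the uniform approximation $|L_j(s+i\tau)-g_j(s)|<\delta$ supplied by Theorem \ref{th:main} into approximation of the derivatives $L_j^{(k)}(\sigma_0+i\tau)$ via Cauchy's integral formula.

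In detail, fix a target vector $(a_j^{(k)})\in\mathbb{C}^{mN}$ and $\varepsilon>0$. Choose a closed disk $D$ centred at $\sigma_0$ of radius $r$ so small that $D\subset\{s:\max_j\sigma_{\textup{m}}(L_j)<\Re s<1\}$; its complement is connected, as required. Replace $a_j^{(0)}$ by a value $b_j^{(0)}\neq 0$ with $|b_j^{(0)}-a_j^{(0)}|<\varepsilon/2$, and set $b_j^{(k)}=a_j^{(k)}$ for $1\leq k<N$. The map sending a polynomial $P_j$ of degree $<N$ to $((e^{P_j})^{(k)}(\sigma_0))_{0\leq k<N}$ surjects onto $\{(c_0,\ldots,c_{N-1})\in\mathbb{C}^N:c_0\neq 0\}$: once one chooses $P_j(\sigma_0)=\log b_j^{(0)}$, differentiating $(e^{P_j})'=P_j'\,e^{P_j}$ repeatedly gives a triangular system that determines $P_j^{(k)}(\sigma_0)$ uniquely from $b_j^{(0)},\ldots,b_j^{(k)}$. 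This produces polynomials $P_j$ for which $g_j:=e^{P_j}$ is non-vanishing on all of $\mathbb{C}$ and satisfies $g_j^{(k)}(\sigma_0)=b_j^{(k)}$ exactly.

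Applying Theorem \ref{th:main} with $K_j:=D$ for every $j$ then yields a set of $\tau$ of positive lower density on which $\max_j\max_{s\in D}|L_j(s+i\tau)-g_j(s)|<\delta$, for any $\delta>0$ we wish. Cauchy's integral formula on $\partial D$ bounds $|L_j^{(k)}(\sigma_0+i\tau)-g_j^{(k)}(\sigma_0)|\leq k!\,\delta/r^k$ for each $k<N$. Taking $\delta$ small enough in terms of $r,N,\varepsilon$ makes this last quantity $<\varepsilon/2$, and combined with $|g_j^{(k)}(\sigma_0)-a_j^{(k)}|\leq \varepsilon/2$ (an equality to $0$ when $k\geq 1$) gives $|L_j^{(k)}(\sigma_0+i\tau)-a_j^{(k)}|<\varepsilon$ for all $j,k$, yielding density.

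The only real subtlety is respecting the non-vanishing hypothesis of Theorem \ref{th:main}. A naive Taylor polynomial with constant term $a_j^{(0)}$ fails when $a_j^{(0)}=0$, and even perturbing that constant term to some small nonzero value typically forces nearby zeros whenever the prescribed higher derivatives are sizeable. The exponential substitution $g_j=e^{P_j}$ sidesteps this uniformly: it is non-vanishing by construction, and its jet at $\sigma_0$ can still be prescribed arbitrarily subject only to the harmless modification $a_j^{(0)}\rightsquigarrow b_j^{(0)}\neq 0$. Past this observation the argument is entirely routine, and is essentially the standard passage from universality to value-distribution results indicated in \cite[Section 10]{S}.
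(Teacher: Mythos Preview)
Your proof is correct and is exactly the standard argument the paper defers to: the paper does not spell out a proof of this corollary but refers the reader to \cite[Section 8]{M} and \cite[Section 10]{S}, where precisely this route (the exponential ansatz $g_j=e^{P_j}$ to secure non-vanishing target functions with prescribed $N$-jet, followed by Cauchy's estimate to pass from uniform approximation on a disk to approximation of derivatives at its center) is carried out.
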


\begin{cor}
Let $N\in\mathbb{N}$ and $L_1,\ldots,L_m\in\Sclass$ satisfy \eqref{eq:SelbergSA} and \eqref{eq:SelbergSB}. If continuous functions $f_l:\mathbb{C}^{mN}\to\mathbb{C}$, $l=0,1,\ldots,L$ satisfy
\[
\sum_{l=0}^L s^lf_l\left(L_1(s),\ldots,L_m(s),\ldots,L^{(N-1)}_1(s),\ldots,L^{(N-1)}_m(s)\right)\equiv 0
\]
for all $s\in\mathbb{C}$, then $f_l\equiv 0$ for all $0\leq l\leq L$.
\end{cor}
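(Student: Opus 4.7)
The plan is to argue by contradiction, using the joint universality of Theorem \ref{th:main} upgraded to the derivatives of the $L_j$'s via a Cauchy-integral trick. Suppose, contrary to the conclusion, that some $f_l\not\equiv 0$, and let $l_0$ be the largest such index. Pick $\mathbf{a}=(a_{j,k})_{1\le j\le m,\,0\le k\le N-1}\in\mathbb{C}^{mN}$ with $f_{l_0}(\mathbf{a})\ne 0$; by continuity of the $f_l$'s there exist $\delta>0$, $M>0$, and an open ball $U\subset\mathbb{C}^{mN}$ around $\mathbf{a}$ on which $|f_{l_0}(\mathbf{z})|>\delta$ and $|f_l(\mathbf{z})|\le M$ for every $l<l_0$. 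Perturbing $\mathbf{a}$ slightly if necessary, I may also assume $a_{j,0}\ne 0$ for every $j$.

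Next, fix any $\sigma_0\in(\max_j\sigma_{\textup{m}}(L_j),1)$ and a small $r>0$ so that the closed disk $\overline{D(\sigma_0,r)}$ still lies in the strip $(\max_j\sigma_{\textup{m}}(L_j),1)$. For each $j$ I select a polynomial $p_j$ of degree at most $N-1$ such that $g_j(s):=\exp(p_j(s))$ matches the prescribed data $g_j^{(k)}(\sigma_0)=a_{j,k}$ for $k=0,1,\ldots,N-1$; this is achievable by solving triangularly for the Taylor coefficients of $p_j$ at $\sigma_0$, using $a_{j,0}\ne 0$ and any chosen branch of $\log a_{j,0}$. Each $g_j$ is entire and non-vanishing on $K_j:=\overline{D(\sigma_0,r)}$, so Theorem \ref{th:main} yields, for every $\varepsilon>0$, a set of shifts $\tau$ of positive lower density on which $\max_j\max_{s\in K_j}|L_j(s+i\tau)-g_j(s)|<\varepsilon$. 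Applying Cauchy's integral formula on a slightly smaller circle converts this uniform closeness into $|L_j^{(k)}(\sigma_0+i\tau)-a_{j,k}|\ll_r\varepsilon$ for $0\le k\le N-1$. Taking $\varepsilon$ small enough and $T\to\infty$ produces a sequence $\tau_n\to\infty$ with
\[
\mathbf{v}(\tau_n):=\bigl(L_1(\sigma_0+i\tau_n),\ldots,L_m^{(N-1)}(\sigma_0+i\tau_n)\bigr)\in U\qquad\text{for all }n.
\]

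Writing $s_n:=\sigma_0+i\tau_n$ and using $f_l\equiv 0$ for $l>l_0$, the hypothesis at $s=s_n$ reads $\sum_{l=0}^{l_0}s_n^l f_l(\mathbf{v}(\tau_n))=0$, and division by $s_n^{l_0}$ rearranges to
\[
f_{l_0}(\mathbf{v}(\tau_n))=-\sum_{l=0}^{l_0-1}s_n^{l-l_0}f_l(\mathbf{v}(\tau_n)).
\]
The right-hand side is bounded in modulus by $M\sum_{l=0}^{l_0-1}|s_n|^{l-l_0}$, which tends to $0$ as $n\to\infty$, whereas the left-hand side has modulus $>\delta$ for every $n$ because $\mathbf{v}(\tau_n)\in U$. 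This contradiction forces all $f_l\equiv 0$.

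The only non-routine step is upgrading the joint universality of $L_j$ to simultaneous approximation of all derivatives $L_j^{(k)}$; the argument above handles this through the exponential-of-polynomial test functions and one Cauchy-integral estimate, and this is the sole technical point I expect to treat carefully. Everything else is elementary continuity and a comparison of polynomial growth in $s_n$, so I do not foresee any further obstacle.
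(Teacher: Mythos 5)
Your argument is correct and is essentially the standard route the paper points to (Steuding, Section~10; Mishou, Section~8): the exponential-of-a-polynomial targets plus a Cauchy estimate is exactly how the preceding denseness corollary for $\bigl(L_1,\ldots,L_m^{(N-1)}\bigr)$ is obtained from Theorem \ref{th:main}, and the largest-index contradiction is the classical Voronin functional-independence argument. In effect you re-prove the previous corollary inline rather than citing it, but the substance of the proof is the same.
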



\section{A denseness lemma}

Let us fix $L$-functions $L_1,\ldots,L_m\in\Sclass$ and compact sets $K_1,\ldots,K_m\subset \{s\in\mathbb{C}:\max_{j=1,2,\ldots,m}{\sigma_{\textup{m}}(L_j)}<\Re s< 1\}$. Take $\sigma_1>\max_{j=1,2,\ldots,m}\{\sigma_{\textup{m}}(L_j)\}$ and $\sigma_2<1$ such that $K_j$, $j=1,2,\ldots,m$, are the subset of the strip $D:=\{s\in\mathbb{C}:\sigma_1<\Re s< \sigma_2\}$ and denote the space of analytic functions on $D$ equipped with the topology of uniform convergence on compacta by $H(D)$. 

Then, the main purpose of this section  is to prove the so-called denseness lemma in the space $H(D)^m$, which plays a crucial role in the proof of universality and says that any collection of analytic functions from $H(D)^m$ can be approximated by given $L$-functions $L_1,\ldots,L_m$ twisted by certain sequence of complex numbers with absolute value $1$.

In order to show it, let $\gamma:=\{s\in\mathbb{C}: |s|=1\}$ and $\Omega:=\prod_{p}\gamma_p$ be an infinite-dimensional torus with product topology and pointwise multiplication, where $\gamma_p=\gamma$ for each prime $p$. It is well known that $\Omega$ is a compact topological abelian group, so there is a normalized Haar measure $m_H$ on $(\Omega,\mathcal{B}(\Omega))$, where $\mathcal{B}(\Omega)$ denotes the class of Borel sets of $\Omega$. 

Let $\omega(p)$ denote the projection of $\omega\in\Omega$ to the coordinate space $\gamma_p$ and $\omega:\mathbb{N}\to\mathbb{C}$ be a unimodular completely multiplicative extansion of $\omega$. Then for any $L\in\Sclass$ defined for $\sigma>1$ by the series  $\sum_{n=1}^\infty a_L(n)n^{-s}$ we put
\[
L(s,\omega) = \sum_{n=1}^\infty\frac{a_L(n)\omega(n)}{n^s},\qquad s\in D.
\]
It turned out (see for example \cite[Lemma 4.1]{S}) that $L(s,\omega)$ is a random element on the probabilistic space $(\Omega,\mathcal{B}(\Omega),m_H)$ and  
for almost all $\omega\in\Omega$ we have (see \cite[Eq.~(3.17)]{NS})
\[
\log L(s,\omega) = \sum_p\sum_{k=1}^{\infty}\frac{b_L(p^k)\omega(p)^k}{p^{ks}},\qquad(s\in D).
\]

Thus, for $L_j\in\Sclass$, $j=1,2,\ldots,m$, let us put
\[
g_{p,j}(s,\omega(p)) = \sum_{k=1}^\infty\frac{b_{L_j}(p^k)\omega(p)^k}{p^{ks}},\qquad \omega\in\Omega
\]
and 
\[
\underline{g}_p(s,\omega(p)) = (g_{p,1}(s,\omega(p)),\ldots, g_{p,m}(s,\omega(p))).
\]

Therefore, the main result of this section is the following proposition, which strongly relies on Selberg's conjecture.
\begin{proposition}\label{prop:dense}
If we assume the truth of \eqref{eq:SelbergSA} and $\eqref{eq:SelbergSB}$, then the set of convergent series
\[
\left\{\sum_p\underline{g}_p(s,\omega(p)):\omega\in\Omega\right\}
\]
is dense in the space $H(D)^m$.
\end{proposition}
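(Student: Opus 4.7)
The plan is to follow the classical functional-analytic route for density of random Dirichlet series (in the style of Voronin, Bagchi and Nagoshi--Steuding), reducing the proposition to a Hahn--Banach / Pechersky-type criterion and verifying that criterion by exploiting the strong orthonormality relations \eqref{eq:SelbergSA}--\eqref{eq:SelbergSB}.

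First I would separate the main term $a_{L_j}(p)\omega(p)/p^s$ in each $g_{p,j}(s,\omega(p))$ from the higher-order contributions $\sum_{k\ge 2}b_{L_j}(p^k)\omega(p)^k/p^{ks}$. Thanks to $b_{L_j}(p^k)\ll p^{k\theta}$ with $\theta<1/2$ and $\Re s>1/2$ throughout $D$, the latter converges absolutely and uniformly on compact subsets of $D$ and depends continuously on $\omega\in\Omega$. The proposition therefore reduces to showing that
\[
\Bigl\{\sum_p\underline{v}_p\,\omega(p):\omega\in\Omega,\ \text{series convergent in }H(D)^m\Bigr\},\qquad\underline{v}_p:=\bigl(a_{L_j}(p)/p^s\bigr)_{j=1}^m,
\]
is dense in $H(D)^m$. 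A Fr\'echet-space density criterion of Pechersky--Bagchi type then asserts that this is so provided (i) the closed linear span of $\{\underline{v}_p\}_p$ is all of $H(D)^m$, and (ii) for every continuous linear functional $\Phi$ on $H(D)^m$, the condition $\sum_p|\Phi(\underline{v}_p)|<\infty$ forces $\Phi(\underline{v}_p)=0$ for all primes $p$. By Riesz representation such a $\Phi$ is given by compactly supported complex Borel measures $\mu_1,\dots,\mu_m$ on $D$, so that
\[
\Phi(\underline{v}_p)=\sum_{j=1}^m a_{L_j}(p)F_j(\log p),\qquad F_j(z):=\int_D e^{-sz}\,d\mu_j(s),
\]
each $F_j$ being an entire function of exponential type by Paley--Wiener.

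The heart of the argument is the verification of (ii). I would form the dual sums $\sum_{p\le x}a_{L_l}(p)\overline{\Phi(\underline{v}_p)}=\sum_{k=1}^m\sum_{p\le x}a_{L_l}(p)\overline{a_{L_k}(p)}\,\overline{F_k(\log p)}$ and apply Abel summation with the asymptotic expansions \eqref{eq:SelbergSA}--\eqref{eq:SelbergSB} carried out to order $(\log x)^{-(2m+1)}$. The diagonal positivity $c^{(l)}_1>0$ together with the fact that off-diagonal sums only begin at order $(\log x)^{-2}$ permits one to recover, up to an admissible error, each value $F_l(\log p)$ (averaged against a smooth weight) from the correlations $\Phi(\underline{v}_p)$. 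Summability $\sum_p|\Phi(\underline{v}_p)|<\infty$ then propagates through this inversion to force $F_l$ to vanish on a sufficiently dense subset of $\{\log p:p\text{ prime}\}$ to conclude $F_l\equiv 0$ from the exponential-type bound, and hence $\mu_l=0$ by Laplace injectivity. The spanning condition (i) is a consequence of the same inversion mechanism applied to a test functional.

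The main obstacle will be precisely this matrix-inversion step: handling $m$ unknown entire functions simultaneously requires inverting the matrix of correlations $(c^{(k,l)}_j)_{k,l}$ to $2m$ successive orders in $1/\log x$. That is why the strong form of Selberg's conjecture, with error $O(x/(\log x)^{2m+2})$, is assumed and is not implied by $R(x)\ll 1$ alone; the $2m+1$ explicit main coefficients in \eqref{eq:SelbergSA}--\eqref{eq:SelbergSB} are exactly what makes the system of equations for $F_1,\dots,F_m$ solvable and forces the required vanishing.
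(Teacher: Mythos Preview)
Your reduction to the leading terms $\underline{v}_p=(a_{L_j}(p)p^{-s})_j$ and the appeal to a Hilbert/Fr\'echet density criterion are correct and match the paper (which passes to the Bergman space $H_1^m\subset L^2(U)$ and invokes \cite[Theorem~5.4]{S}). The gap is in your verification of condition~(ii). You propose to multiply $\Phi(\underline{v}_p)=\sum_k a_{L_k}(p)F_k(\log p)$ by $\overline{a_{L_l}(p)}$, sum over $p\le x$, and apply Abel summation with \eqref{eq:SelbergSA}--\eqref{eq:SelbergSB} to isolate each $F_l$. But Abel summation on $[2,x]$ produces integrals of the form $\int_2^x \phi_{k,l}(u)\,u^{-1}\overline{F_k'(\log u)}\,du$, and you have no a~priori control on $F_k'$: the $F_k$ are entire of exponential type and their derivatives can be as large as $e^{\sigma_2 x}$ on $[x,x+1]$, which swamps any saving from the extra factor $(\log u)^{-1}$ coming from orthogonality. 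Your ``matrix inversion to $2m$ orders'' remains a metaphor unless you can freeze the slowly varying factors $F_k(\log u)$, and that is exactly the missing ingredient.

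The paper handles this by a genuinely different device. It never tries to isolate individual $F_l$'s. Instead it restricts the prime sum to a very short window $p\in[e^{x'},e^{x'+B_mx^{-2m}}]$ and bounds $\sum_p^{*}|\Phi(\underline{v}_p)|$ from below via the second moment $\sum_p^{*}|\Phi(\underline{v}_p)|^2$, expanded into diagonal and off-diagonal pieces. The key technical lemma (Lemma~2.6/Corollary~2.7) approximates each $\Delta_j(\xi)=F_j(\xi)$ on $[x,x+1]$ by a polynomial of degree $\ll x$ and then applies Markov's inequality to obtain $|\Delta_j'(\xi)|\ll x^{2j}|\Delta_j(x_0^{(j)})|$ on a nested sequence of subintervals $I_m\subset\cdots\subset I_1\subset[x,x+1]$. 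This local derivative control is precisely what makes partial summation work on the short window: the diagonal contributes $\gg e^{x}x^{-(2m+1)}\sum_j|\Delta_j(x_0^{(j)})|^2$ while the off-diagonal, thanks to the missing leading term in \eqref{eq:SelbergSB}, is $\ll e^{x}x^{-(2m+2)}\sum_j|\Delta_j(x_0^{(j)})|^2$, and a Phragm\'en--Lindel\"of argument (via \cite[Lemma~3]{KK}) supplies a sequence $x_k\to\infty$ with $|\Delta_1(x_k)|\gg e^{-\sigma_2 x_k}$ so that the lower bound is nontrivial. The number $2m$ in the exponents of \eqref{eq:SelbergSA}--\eqref{eq:SelbergSB} enters because each of the $m$ nested applications of Markov's inequality costs a factor $x^2$ in interval length; it is not a matrix-rank phenomenon as you suggest.
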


Let $U$ be a bounded simply connected smooth Jordan domain satisfying $\overline{U}\subset D$ and $K_j\subset U$ for every $j=1,2,\ldots,m$. Let $L^2(U)$ be the complex Hilbert space of all square integrable complex functions on $U$ with the inner product
\[
\langle f,g\rangle = \iint_U f(s)\overline{g(s)}d\sigma dt.
\]
Define the Bergman space $H_1$ as the closure of $H(D)$ in $L^2(U)$. Then $H_1^m$ is the complex Hilbert space with the inner product given, for $\underline{f}=(f_1,\ldots,f_m)$ and $\underline{g}=(g_1,\ldots,g_m)$, by
\[
\langle \underline{f},\underline{g}\rangle = \sum_{j=1}^m\iint_U f_j(s)\overline{g_j(s)}d\sigma dt.
\]

Now, define
\[
\underline{h}_p(s) = (h_{p,1}(s),\ldots,h_{p,m}(s)) := \left(\frac{a_{L_1}(p)}{p^s},\ldots,\frac{a_{L_m}(p)}{p^s}\right).
\]
Then, by the fact that $b_L (p^k) \ll p^{k\theta}$ for some $\theta < 1/2$, one can easily prove that 
\[
\sum_p r_{p,j}(s,\omega):=\sum_p \Big(g_{p,j}(s,\omega(p)) - \omega(p)h_{p,j}(s)\Big),\qquad(j=1,\ldots,n,\ |\omega(p)|=1)
\]
is absolutely convergent on $\overline{U}$.

Hence, in order to prove Proposition \ref{prop:dense} it suffices to prove that the set of all convergent series
\begin{equation}\label{eq:DenseSet}
\left\{\sum_{p>v}\omega(p)\underline{h}_p(s):\omega\in\Omega\right\}
\end{equation}
is dense in $H_1^m$ for an arbitrary given $v>0$. Indeed, let $v$ be a sufficiently large number such that
\[
\sum_{j=1}^m\max_{s\in\overline{U}}\left|\sum_{p>v}r_{p,j}(s,\omega)\right|<\frac{\varepsilon}{2}\qquad\text{for all $\omega\in\Omega$}.
\]
The fact that for every $f\in H_1$ with the norm $||f||$ and $s\in U$ we have $|f(s)|<\frac{||f||}{\sqrt{\pi}\dist(s,\partial {U})}$ (see for example \cite[Chapter I, Section 1, Lemma 1]{G}) clearly implies that the approximation in respect to the norm $\Vert\cdot\Vert$ in $H_1$ gives the uniform approximation on every compact subset $K$ of $U$. Hence, from the fact that the set \eqref{eq:DenseSet} is dense in $H_1^m$, we obtain that, for every $\underline{f}=(f_1,\ldots,f_m)\in H(D)^m$, there exists a sequence $\omega'(p)$ such that
\[
\max_{1\leq j\leq m}\max_{s\in K_j}\left|\sum_{p>v}\omega'(p)h_{p.j}(s) - f_j(s)+\sum_{p\leq v}g_{p,j}(s,1)\right|< \frac{\varepsilon}{2}.
\] 
Therefore, putting
\[
\omega(p) = \begin{cases}1&\text{if $p\leq v$},\\
\omega'(p)&\text{if $p>v$}\end{cases}
\]
gives that
\begin{equation}\label{eq:Dense}
\max_{1\leq j\leq m}\max_{s\in K_j}\left|\sum_{p}g_{p.j}(s,\omega(p)) - f_j(s)\right|< \varepsilon.
\end{equation}

In order to prove that the set \eqref{eq:DenseSet} is dense in $H_1^m$ we shall use the following lemma for the sequence $\underline{h}_p(s)$ and the Hilbert space $H_1^m$.
\begin{lemma}\label{lem:complexHilbert}
Let $H$ be a complex Hilbert space. Assume that a sequence $u_n\in H$, $n\in\mathbb{N}$, is such that
\begin{enumerate}[\upshape (i)]
\item the series $\sum_n ||u_n||^2< \infty$;
\item for any element $0\ne e\in H$ the series $\sum_n |\langle u_n,e\rangle|$ is divergent.
\end{enumerate}
Then the set of convergent series
\[
\left\{\sum_n a_nu_n\in H:|a_n|=1\right\}
\]
is dense in $H$.
\end{lemma}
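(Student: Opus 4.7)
The plan is to combine a Hahn--Banach argument on a convex auxiliary set with a midpoint-decomposition-plus-tail argument, and then close the remaining gap via a support analysis of an associated random unimodular series.

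First I would introduce the auxiliary set $A := \{\sum_n c_n u_n : |c_n|\le 1 \text{ for all } n,\ \text{series converges in } H\}$, which is convex and balanced (stable under multiplication by scalars of modulus $\le 1$). To show $\overline{A} = H$, suppose otherwise: Hahn--Banach together with the balancedness of $A$ yields a nonzero $e\in H$ and $M>0$ with $|\langle x,e\rangle|\le M$ for every $x\in A$. For each $N$, setting $\sigma_n = \overline{\langle u_n,e\rangle}/|\langle u_n,e\rangle|$ when $\langle u_n,e\rangle\ne 0$ and $\sigma_n = 0$ otherwise produces the finite sum $x_N = \sum_{n\le N}\sigma_n u_n\in A$ with $\langle x_N,e\rangle = \sum_{n\le N}|\langle u_n,e\rangle|$, which tends to $\infty$ as $N\to\infty$ by hypothesis (ii), contradicting the bound $M$. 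Hence $\overline{A}=H$.

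Next I would approximate any element of $A$ by midpoints of pairs in $S$. Given $x=\sum c_n u_n\in A$ and $\varepsilon>0$, pick $N$ with $\|\sum_{n>N}c_n u_n\|<\varepsilon$. For $n\le N$, decompose $c_n = (a_n+b_n)/2$ with $|a_n|=|b_n|=1$, which is possible because every point of the closed unit disk is the midpoint of some chord of the unit circle. For $n>N$, condition (i) together with Kolmogorov's three-series theorem applied to i.i.d.\ uniform-on-unit-circle random variables guarantees a deterministic sequence $(a_n)_{n>N}$ with $|a_n|=1$ such that $\sum_{n>N}a_n u_n$ converges in $H$; set $b_n = -a_n$ there. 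Then $y=\sum a_n u_n$ and $z=\sum b_n u_n$ both lie in $S$, and by construction $(y+z)/2 = \sum_{n\le N}c_n u_n$, so $\|x-(y+z)/2\| = \|\sum_{n>N}c_n u_n\| < \varepsilon$. This shows $A\subseteq \overline{\operatorname{conv}(S)}$, and combined with the previous step gives $\overline{\operatorname{conv}(S)}=H$.

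The main obstacle is promoting $\overline{\operatorname{conv}(S)}=H$ to $\overline{S}=H$, and here my plan is probabilistic. Consider $X = \sum \xi_n u_n$ with $\xi_n$ i.i.d.\ uniform on the unit circle; by (i) and Kolmogorov, the series converges almost surely in $H$, and its support lies inside $\overline{S}$. For any nonzero $e\in H$ write $\operatorname{Re}\langle X,e\rangle = \sum|\langle u_n,e\rangle|\cos(\theta_n)$ with $\theta_n$ i.i.d.\ uniform on $[0,2\pi)$; since $\sum|\langle u_n,e\rangle|=\infty$ by (ii) while $|\langle u_n,e\rangle|\to 0$ (from (i) via Cauchy--Schwarz), a greedy-overshoot argument on the Minkowski sum of the intervals $[-|\langle u_n,e\rangle|,|\langle u_n,e\rangle|]$ shows that every target in $\mathbb{R}$ is attainable, so the support of $\operatorname{Re}\langle X,e\rangle$ equals $\mathbb{R}$. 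This rules out $\operatorname{supp}(X)$ sitting in any closed half-space, and a Pechersky-type structural observation for sums of independent rotationally symmetric Hilbert-space contributions (that $\operatorname{supp}(X)$ is a closed translate of a linear subspace of $H$) closes the gap, forcing $\operatorname{supp}(X)=H$ and hence $\overline{S}=H$.
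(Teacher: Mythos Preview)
The paper does not prove this lemma; it simply cites \cite[Theorem~5.4]{S}. So there is no in-paper argument to compare against, only the external reference.

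Your first two stages are correct: the Hahn--Banach step gives $\overline{A}=H$, and the midpoint decomposition gives $A\subseteq\overline{\operatorname{conv}(S)}$, hence $\overline{\operatorname{conv}(S)}=H$. But observe that these stages are logically disconnected from your conclusion. If the probabilistic step works, it already yields $\operatorname{supp}(X)=\overline{S}=H$ directly, making the first two stages redundant; and if it fails, knowing only $\overline{\operatorname{conv}(S)}=H$ does not help, since that is strictly weaker than $\overline{S}=H$.

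The genuine gap is in the final stage. You correctly argue that $\operatorname{Re}\langle X,e\rangle$ has support $\mathbb{R}$ for every nonzero $e$, so $\operatorname{supp}(X)=\overline{S}$ lies in no closed half-space. You then assert a ``Pechersky-type structural observation'' that $\operatorname{supp}(X)$ must be a translate of a closed linear subspace, and conclude. This is exactly the nontrivial core of the lemma, and you neither prove it nor give a precise reference. Circle-invariance together with the half-space property is not enough on its own: for instance $\{(z,w)\in\mathbb{C}^2:|z|=|w|\}$ is closed, invariant under scalar multiplication by unimodular numbers, and projects onto all of $\mathbb{C}$ along every complex line, yet it is not a subspace. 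So any valid argument must exploit the specific structure of $S$ as a set of unimodular-coefficient series, and doing that is essentially the content of the theorem you are invoking. The standard proofs (Bagchi, Laurin\v{c}ikas, and Steuding's Theorem~5.4 itself) do not go through a support-of-measure computation; they construct the coefficients $a_n$ by a direct greedy/inductive scheme in which condition~(ii) lets successive blocks of terms push the partial sum toward the target while condition~(i) bounds the overshoot. As written, your appeal to ``Pechersky'' is in effect an appeal to the very result you are asked to establish.
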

\begin{proof}
This is \cite[Theorem 5.4]{S}.
\end{proof}

Since $\Re s>\sigma_1>1/2$ for all $s\in U$, one can easily show that
\[
\sum_p || \underline{h}_p (s) ||^2  < \infty
\]
and the condition (i) holds. 

Now let $\underline{g}=(g_1,\ldots,g_m)\in H_1^m$ be a non-zero element. Then 
\[
\langle \underline{h}_p(s),\underline{g}(s)\rangle = \sum_{j=1}^m a_{L_j}(p)\Delta_j(\log p),
\]
where $\Delta_j(z) = \iint_U e^{-sz}\overline{g_j(s)}d\sigma dt$. 
Then, in order to complete the proof of Propositon \ref{prop:dense}  it suffices to prove the following lemma.

\begin{lemma}\label{lem:main}
Let $\underline{g}(s) = (g_1(s),\ldots,g_m(s))\in H_1^m$ be a non-zero element and $\Delta_j(z) =\iint_U e^{-sz}\overline{g_j(s)}d\sigma dt$. Then, assuming Selberg's conjecture\eqref{eq:SelbergSA} and \eqref{eq:SelbergSB} for $L_1,\ldots,L_m\in\Sclass$ gives that the series
\[
\sum_p \left|a_{L_1}(p)\Delta_1(\log p) + \cdots + a_{L_m}(p)\Delta_m(\log p)\right|
\]
is divergent.
\end{lemma}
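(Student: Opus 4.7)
The plan is to analyse $\sum_{p \le x} |A_p|^2$, where $A_p := \sum_{j=1}^m a_{L_j}(p)\Delta_j(\log p)$, via the precise Selberg asymptotics \eqref{eq:SelbergSA}--\eqref{eq:SelbergSB}, and then recover divergence of $\sum_p |A_p|$ by a dyadic Cauchy--Schwarz trick. First, expand $|A_p|^2 = \sum_{j,k} a_{L_j}(p)\overline{a_{L_k}(p)}\Delta_j(\log p)\overline{\Delta_k(\log p)}$, insert the integral representations of $\Delta_j(\log p)$ and $\overline{\Delta_k(\log p)}$, and interchange summation with integration (Fubini applies because $\Re(s_1+\overline{s_2})>2\sigma_1>1$ throughout $U\times U$), obtaining
\[
\sum_{p\le x}|A_p|^2 = \sum_{j,k}\iint_U\iint_U \overline{g_j(s_1)}g_k(s_2)\bigg(\sum_{p\le x}\frac{a_{L_j}(p)\overline{a_{L_k}(p)}}{p^{s_1+\overline{s_2}}}\bigg)\,ds_1\,d\overline{s_2}.
\]

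Next, partial summation applied to \eqref{eq:SelbergSA} and \eqref{eq:SelbergSB} yields, for $\Re w>1$, the tail estimates
\[
\sum_{p>x}\frac{|a_{L_j}(p)|^2}{p^w} = \frac{c^{(j)}_1 x^{1-w}}{(w-1)\log x}\bigl(1 + O(1/\log x)\bigr),\qquad \sum_{p>x}\frac{a_{L_j}(p)\overline{a_{L_k}(p)}}{p^w} \ll \frac{x^{1-\Re w}}{(\log x)^2}\quad(j\ne k);
\]
this is precisely where the orthonormality \eqref{eq:SelbergSB} is crucial. Substituting back, using $x^{1-w}/(w-1)=\int_x^\infty u^{-w}\,du$ and one more application of Fubini collapses the diagonal contribution to
\[
\sum_{p>x}|A_p|^2 = \frac{1}{\log x}\sum_{j=1}^m c^{(j)}_1\int_x^\infty|\Delta_j(\log u)|^2\,du \cdot(1+o(1)),
\]
the off-diagonal $j\ne k$ terms being smaller by a factor of $\log x$. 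Since $\underline g\ne 0$, injectivity of the Laplace transform forces some $\Delta_j\not\equiv 0$, and a quantitative analysis of the tail integral $\int_x^\infty|\Delta_j(\log u)|^2\,du$---localising on the boundary $\Re s=\sigma_1$ of $U$---should provide a lower bound $\gg x^{1-2\sigma_1}/(\log x)^A$ for some $A\ge 0$ along a sequence $x=x_n\to\infty$. A dyadic subtraction (using $2^{1-2\sigma_1}<1$) then yields $\sum_{p\in(N,2N]}|A_p|^2 \gg N^{1-2\sigma_1}/(\log N)^{A+1}$ for $N=x_n$.

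Finally, I would convert this $L^2$ lower bound to the desired $L^1$ lower bound via the elementary inequality $\sum_{p\in(N,2N]}|A_p|\ge \sum_{p\in(N,2N]}|A_p|^2 / \max_{p\in(N,2N]}|A_p|$. The Ramanujan bound $|a_{L_j}(p)|\ll p^\varepsilon$ and the trivial estimate $|\Delta_j(\log p)|\ll p^{-\sigma_1}$ (obtained from $\Re s>\sigma_1$ on $U$) give $\max_{p\in(N,2N]}|A_p|\ll N^{-\sigma_1+\varepsilon}$, whence $\sum_{p\in(N,2N]}|A_p|\gg N^{1-\sigma_1-\varepsilon}/(\log N)^{A+1}$. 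Summing over dyadic $N = 2^k$ along $x_n$ and using $\sigma_1<1$ yields $\sum_p|A_p|=\infty$. The main obstacle is the quantitative lower bound on $\int_x^\infty|\Delta_j(\log u)|^2\,du$: the integrand oscillates in the imaginary direction on $U$ and one has to rule out unusually fast cancellation, which is a non-degeneracy statement for the Laplace transform of the nonzero function $\overline{g_j}$ near $\Re s = \sigma_1$.
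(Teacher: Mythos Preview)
Your proposal leaves the decisive step unproved, and the paper's argument shows why this step is not a minor technicality but the heart of the matter.

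You correctly identify the ``main obstacle'' as the lower bound $\int_x^\infty |\Delta_j(\log u)|^2\,du \gg x^{1-2\sigma_1}/(\log x)^A$ along a sequence, but you do not establish it, and in fact the claimed exponent is too optimistic. Since $\overline{U}\subset D$, we have $\Re s \ge \sigma_1' := \inf_{U}\Re s > \sigma_1$ on $U$, so already $|\Delta_j(v)|\ll e^{-\sigma_1' v}$; moreover there is no general reason why a nonzero $g_j\in H_1$ should force $|\Delta_j(v)|$ to be frequently close to this upper envelope. The only a priori lower bound available is the Phragm\'en--Lindel\"of type statement (used in the paper, via \cite{KK}) that $|\Delta_1(x_k)|\gg e^{-\sigma_2 x_k}$ along a sequence $x_k\to\infty$. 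With that exponent your $L^2\to L^1$ conversion breaks down: combining $\sum_{p\in(N,2N]}|A_p|^2 \gg N^{1-2\sigma_2}/(\log N)^B$ with $\max_{p\in(N,2N]}|A_p|\ll N^{-\sigma_1+\varepsilon}$ yields only $\sum_{p\in(N,2N]}|A_p|\gg N^{1-2\sigma_2+\sigma_1-\varepsilon}$, and $1-2\sigma_2+\sigma_1$ can be negative when the strip is wide (e.g.\ $\sigma_1=0.6$, $\sigma_2=0.9$). Your dyadic subtraction is also fragile: you have a lower bound on $F(x_n)$ only along a sequence, but no matching control on $F(2x_n)$, so the difference need not be bounded below.

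The paper circumvents exactly this mismatch by localising to a \emph{tiny} interval $I\subset[x,x+1]$ of length $\asymp x^{-2m}$ on which every $\Delta_j$ is essentially constant. This is achieved by approximating each $\Delta_j$ on $[x,x+1]$ by a polynomial of degree $O(x)$ and applying Markov's inequality iteratively (Lemma~\ref{lem:3} and Corollary~\ref{cor:Delta}) to find nested intervals $I_1\supset\cdots\supset I_m=I$ with $\tfrac12|\Delta_j(x_0^{(j)})|\le |\Delta_j(\xi)|\le |\Delta_j(x_0^{(j)})|$ on $I$. One then evaluates $S(x)=\sum_{\log p\in I}|A_p|^2$ via \eqref{eq:SelbergSA}--\eqref{eq:SelbergSB} and partial summation; because the $\Delta_j$ are frozen, both the $L^2$ lower bound and the $L^\infty$ upper bound involve the \emph{same} quantity $\sum_j|\Delta_j(x_0^{(j)})|$, which cancels upon division. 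Only at the very end does one invoke $|\Delta_1(x_0^{(1)})|\ge\tfrac12|\Delta_1(x)|\gg e^{-\sigma_2 x}$, giving $\sum_{\log p\in I}|A_p|\gg e^{(1-\sigma_2-\varepsilon)x}/x^{2m+1}$. The localisation to a short interval is precisely what makes the weak Phragm\'en--Lindel\"of exponent $\sigma_2$ sufficient; your global/dyadic approach cannot absorb the gap between $\sigma_1$ and $\sigma_2$.
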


Before we prove the above lemma, we need to obtain good estimation for $\Delta(\log p)=\iint_U p^{-s}\overline{g(s)}d\sigma dt$, where $g(s)$ is a given non-zero element of $H_1$. In order to prove it we use Markov's inequality.

\begin{lemma}[Markov's inequality]
Suppose that $P(t)$ is a polynomial of degree $n$ with real coefficients, which satisfies 
\[
\max_{t\in [-1,1]}|P(t)|\leq 1.
\]
Then for every $t\in [-1,1]$ we have
\[
|P'(t)|\leq n^2.
\]
\end{lemma}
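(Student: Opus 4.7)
The plan is to use the Chebyshev polynomial $T_n(t)=\cos(n\arccos t)$ as the extremal benchmark: it has degree $n$, satisfies $|T_n|\leq 1$ on $[-1,1]$ with alternating extrema $T_n(t_k)=(-1)^k$ at the $n+1$ nodes $t_k=\cos(k\pi/n)$, and $|T_n'(\pm 1)|=n^2$. I would split $[-1,1]$ into an interior piece $\{t:1-t^2\geq 1/n^2\}$ and a thin boundary piece near $\pm 1$, and bound $|P'|$ by $n^2$ on each.

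On the interior, substitute $t=\cos\theta$, so that $Q(\theta):=P(\cos\theta)$ is a trigonometric polynomial of degree $\leq n$ with $\|Q\|_\infty\leq 1$. Bernstein's classical trigonometric inequality then gives $|Q'(\theta)|\leq n$, and unfolding yields
\[
|P'(t)|\leq\frac{n}{\sqrt{1-t^2}}\leq n^2,
\]
since $\sqrt{1-t^2}\geq 1/n$ on this piece.

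On the boundary strip, Bernstein degenerates and a Chebyshev equioscillation argument is needed. Supposing towards contradiction that $|P'(t_\ast)|>n^2$ at some $t_\ast$ close to $\pm 1$, form the auxiliary polynomial $R(t):=T_n(t)-\lambda P(t)$ with $\lambda:=T_n'(t_\ast)/P'(t_\ast)$; then $|\lambda|<1$ (since $|T_n'(t_\ast)|\leq n^2<|P'(t_\ast)|$), so $R(t_k)$ inherits the alternating signs of $T_n(t_k)$ at the $n+1$ extremal nodes, forcing at least $n$ zeros of $R$ on $[-1,1]$. Together with Rolle and the extra zero $R'(t_\ast)=0$, this produces more zeros of $R'$ than its degree $\leq n-1$ allows, yielding the contradiction. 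The main obstacle is precisely this second step: organizing the sign-change count so that $t_\ast$ contributes a genuinely new zero of $R'$ beyond the ones supplied by Rolle---this is where the full extremal structure of $T_n$ must be exploited, namely that it attains $\pm 1$ alternately at exactly $n+1$ points and saturates $|T_n'(\pm 1)|=n^2$. Combining both regimes gives $|P'(t)|\leq n^2$ on all of $[-1,1]$.
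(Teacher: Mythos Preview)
The paper does not prove this lemma at all; it treats Markov's inequality as classical and simply refers the reader to Achieser's textbook. So there is no in-paper argument to compare against, and your sketch already goes further than the paper does.

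Your plan is the standard classical route and is sound. The interior step via Bernstein's trigonometric inequality is correct as written: with $t=\cos\theta$ one gets $|P'(t)|\sqrt{1-t^2}\le n$, hence $|P'(t)|\le n^2$ whenever $1-t^2\ge 1/n^2$. For the boundary step, your comparison polynomial $R=T_n-\lambda P$ with $\lambda=T_n'(t_\ast)/P'(t_\ast)$ is the right object, and the bound $|T_n'(t)|\le n^2$ you use is not circular (it follows from $|\sin n\theta|\le n|\sin\theta|$, a one-line induction). The ``main obstacle'' you flag is genuine: one must ensure the zero $R'(t_\ast)=0$ is not already among the $n-1$ Rolle zeros. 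The clean way to arrange this is to take $t_\ast$ to be a point where $|P'|$ attains its maximum on $[-1,1]$; Bernstein then forces $t_\ast>\cos(\pi/n)=t_1$ (since $\sin(\pi/n)>1/n$), so $t_\ast$ lies in $(t_1,1]$. The $n$ simple zeros $z_1>\cdots>z_n$ of $R$ satisfy $z_k\in(t_k,t_{k-1})$, and the Rolle zeros of $R'$ all lie in $(z_n,z_1)$. If $t_\ast\ge z_1$ the extra zero is immediately new (or $t_\ast=z_1$ makes $z_1$ a double zero of $R$, already a contradiction). The remaining sliver $t_\ast\in(t_1,z_1)$ requires one more observation; a common way to sidestep the case analysis entirely is to run the comparison not with $R'$ but via the Lagrange interpolation of $P'$ at the extremal nodes $t_0,\dots,t_{n-1}$ of $T_n$ (the Duffin--Schaeffer/V.~Markov identity), which yields $|P'(t)|\le |T_n'(t)|$ on $[t_1,1]$ directly.

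In short: your proposal is correct in outline, and the gap you identify can be closed by either a sharper endpoint count or by switching to the interpolation form of the comparison; either completion is standard and appears in the reference the paper cites.
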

\begin{proof}
For a proof see for example \cite{Achieser}.
\end{proof}

\begin{cor}\label{cor:Markov}
Let $P(s)$ be polynomial of degree $n$ with complex coefficients. Then for every $a,b$ with $a<b$ and every real $t\in [a,b]$ we have
\[
|P'(t)|\leq \frac{2n^2}{b-a}\max_{t\in [a,b]}|P(t)|.
\]
\end{cor}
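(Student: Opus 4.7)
I would deduce the corollary from the lemma by two standard reductions: an affine change of variables that normalizes the interval to $[-1,1]$, followed by a rotation trick that reduces the complex-coefficient polynomial to one with real coefficients without losing any constant.

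\textbf{Reduction to the standard interval.} First I would introduce the linear substitution $u = \frac{2t-(a+b)}{b-a}$, which maps $[a,b]$ bijectively onto $[-1,1]$. Setting
\[
Q(u) := P\!\left(\tfrac{a+b}{2} + \tfrac{b-a}{2}\,u\right),
\]
the polynomial $Q$ has degree $n$, the chain rule gives $Q'(u) = \frac{b-a}{2}P'(t)$, and $\max_{u\in[-1,1]}|Q(u)| = \max_{t\in[a,b]}|P(t)| =: M$. Hence it suffices to prove that $|Q'(u_0)| \le n^{2} M$ for every $u_0\in[-1,1]$, since multiplying by $\frac{2}{b-a}$ then yields the desired bound with the constant $\frac{2n^{2}}{b-a}$.

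\textbf{Reduction to real coefficients.} The remaining issue is that $Q$ has complex coefficients while the lemma requires real coefficients. Fix $u_0\in[-1,1]$; if $Q'(u_0)=0$ there is nothing to show, so assume otherwise and pick $\theta\in\mathbb{R}$ such that $e^{-i\theta}Q'(u_0) = |Q'(u_0)|$. Define
\[
R(u) := \Re\bigl(e^{-i\theta}Q(u)\bigr).
\]
Since $u$ is real and the coefficients of $e^{-i\theta}Q$ are complex, $R$ is a polynomial of degree $\le n$ obtained by taking the real parts of those coefficients, hence it has real coefficients. Moreover $|R(u)|\le |e^{-i\theta}Q(u)| = |Q(u)| \le M$ on $[-1,1]$, while at the distinguished point $u_0$ we have $R'(u_0) = \Re\bigl(e^{-i\theta}Q'(u_0)\bigr) = |Q'(u_0)|$ by the choice of $\theta$.

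\textbf{Applying Markov and conclusion.} Applying the preceding lemma to the real-coefficient polynomial $R/M$ (which satisfies $\max_{u\in[-1,1]}|R(u)/M|\le 1$) gives $|R'(u_0)|\le n^{2} M$, hence $|Q'(u_0)| \le n^{2} M$, and undoing the affine change of variables yields $|P'(t_0)| \le \frac{2n^{2}}{b-a}\,M$ for the point $t_0\in[a,b]$ corresponding to $u_0$. There is no real obstacle in this argument: both reductions are routine. The only subtle point is that the naive splitting $Q = \Re Q + i\,\Im Q$ on the real axis would produce a spurious factor of $\sqrt{2}$; it is precisely the rotation trick that preserves the sharp Markov constant $n^{2}$ when passing from real to complex coefficients, and this is what makes the stated bound tight (up to the factor $2$ coming from the rescaling).
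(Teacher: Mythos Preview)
Your proof is correct and follows essentially the same approach as the paper: an affine change of variables to reduce $[a,b]$ to $[-1,1]$, followed by the rotation trick (multiply by a unimodular constant and take the real part) to pass to real coefficients and apply Markov's inequality without losing any constant. The only cosmetic difference is that the paper normalizes by the maximum before the rotation step while you normalize after; the argument and the constant $\frac{2n^2}{b-a}$ are identical.
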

\begin{proof}
Let $t_0\in [a,b]$ be such that $|P(t_0)| = \max_{t\in [a,b]}|P(t)|$. Then let us define
\[
P_1(t) = \frac{P\left(\frac{b-a}{2}t+\frac{a+b}{2}\right)}{|P(t_0)|}.
\]
Now, let us take an arbitrary $t\in[-1,1]$ and let $c\in\mathbb{C}$ with $|c|=1$ be such that $cP'_1(t)$ be real. Then applying Markov's inequality for $P_2(t):=\Re (cP_1(t))$ gives
\[
|P'_1(t)|=|cP'_1(t)| = |P'_2(t)|\leq n^2,
\]
so
\[
\max_{t\in[-1,1]}|P'_1(t)|\leq n^2.
\]
On the other hand, we can easily observe that
\[
\max_{t\in[-1,1]}|P'_1(t)| = \frac{b-a}{2|P(t_0)|}\max_{t\in [a,b]}|P'(t)|
\]
and the proof is complete.
\end{proof}

\begin{lemma}\label{lem:3}
Let $U\subset\mathbb{C}$ be open and bounded set and $g$ be Lebesgue square integrable function on $U$. For $z\in\mathbb{C}$ we put
\[
\Delta(z)=\iint_U e^{-sz}\overline{g(s)}d\sigma dt.
\]
Then for every $A>0$ and every interval $I=[x,x+\frac{B}{x^M}]\subset[x,x+1]$ with $B>0$, $M\geq 0$, $x>2$ there exist an interval $I'\subset I$ of length $|I'|\geq \frac{B'}{x^{M+2}}$ with $B':=B'(B,A)>0$ and $x_0\in I'$ such that for all $\xi\in I'$ we have
\[
\frac{1}{2}|\Delta(x)|+O\left(e^{-Ax}\right)\leq \frac{1}{2}|\Delta(x_0)|+O\left(e^{-Ax}\right)\leq |\Delta(\xi)|\leq |\Delta(x_0)|+O\left(e^{-Ax}\right).
\]
Moreover, for every $\xi\in I$ we have
\[
|\Delta'(\xi)|\ll x^{M+2}|\Delta(x_0)|+O(x^{M+2}e^{-Ax}).
\]
\end{lemma}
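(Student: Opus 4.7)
The plan is to reduce the statement to an application of Markov's inequality by first approximating $\Delta(z)$ by a polynomial in $z$ of low degree. Concretely, since $\overline{U}$ is bounded, say $|s|\le R$ for all $s\in\overline{U}$, I would Taylor-expand the exponential and write
\[
\Delta(z)=P_N(z)+E_N(z),\qquad P_N(z)=\sum_{k=0}^{N-1}\frac{(-z)^k}{k!}\iint_U s^k\overline{g(s)}\,d\sigma\,dt,
\]
and use the standard remainder estimate $|e^{-sz}-\sum_{k<N}(-sz)^k/k!|\le |sz|^N e^{|sz|}/N!$. For $z\in[x,x+1]$ the tail is bounded by $(R(x+1))^N e^{R(x+1)}/N!$, so by Stirling there is a constant $C=C(A,R)$ such that taking $N=\lceil Cx\rceil$ forces $|E_N(z)|\ll e^{-Ax}$ uniformly on $[x,x+1]$, while keeping $N^2\ll x^2$.

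Next I would apply Corollary \ref{cor:Markov} to the degree--$N$ polynomial $P_N$ on the interval $I$ of length $B/x^M$. Let $x_0\in I$ be a point where $|P_N|$ attains its maximum on $I$. Corollary \ref{cor:Markov} gives
\[
\max_{\xi\in I}|P_N'(\xi)|\le \frac{2N^2 x^M}{B}|P_N(x_0)|\le C_1\frac{x^{M+2}}{B}|P_N(x_0)|.
\]
Define $\delta:=B/(2C_1 x^{M+2}|P_N(x_0)|)\cdot|P_N(x_0)|= B/(2C_1 x^{M+2})$ and set $I':=[x_0-\delta,x_0+\delta]\cap I$; since $|I|=B/x^M\ge\delta$ for $x>2$, we have $|I'|\ge \delta=B'/x^{M+2}$ with $B'=B'(B,A)$. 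By the mean value theorem and the Markov bound, for every $\xi\in I'$,
\[
|P_N(\xi)-P_N(x_0)|\le C_1\frac{x^{M+2}}{B}|P_N(x_0)|\cdot\delta= \tfrac12|P_N(x_0)|,
\]
hence $|P_N(\xi)|\ge\tfrac12|P_N(x_0)|$.

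Transferring these estimates from $P_N$ back to $\Delta$ via $|\Delta-P_N|\ll e^{-Ax}$, we obtain
\[
|\Delta(\xi)|\ge \tfrac12|P_N(x_0)|+O(e^{-Ax})\ge \tfrac12|\Delta(x_0)|+O(e^{-Ax}),
\]
and for every $t\in I$, $|\Delta(t)|\le |P_N(t)|+O(e^{-Ax})\le |P_N(x_0)|+O(e^{-Ax})\le|\Delta(x_0)|+O(e^{-Ax})$; specializing $t=x$ yields the first inequality in the chain. For the derivative bound, the same Taylor argument applied one order lower gives $|\Delta'(\xi)-P_N'(\xi)|\ll e^{-Ax}$ on $I$, since $E_N'(z)$ is an integral of $(-s)$ times a Taylor remainder of order $N-1$ that enjoys the same exponential decay. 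Combining with the Markov estimate on $|P_N'(\xi)|$ yields
\[
|\Delta'(\xi)|\ll x^{M+2}|\Delta(x_0)|+O(x^{M+2}e^{-Ax}),
\]
which is the claimed inequality.

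The main technical hurdle is the calibration of $N$: it must grow linearly in $x$ so that $P_N$ approximates $\Delta$ to within $O(e^{-Ax})$, but also stay of order $x$ so that $N^2/|I|\asymp x^{M+2}$ in Markov's inequality. Once this balance is struck, everything else reduces to the mean value theorem and the triangle inequality applied to the polynomial approximation.
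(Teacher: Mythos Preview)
Your proof is correct and follows essentially the same approach as the paper's: Taylor-expand $e^{-sz}$ to order $N\asymp x$ so that $\Delta$ and $\Delta'$ are approximated by a polynomial $P$ and its derivative to within $O(e^{-Ax})$ on $[x,x+1]$, then apply Corollary~\ref{cor:Markov} on the interval $I$ of length $B/x^M$ to bound $|P'|$ by $x^{M+2}\max_I|P|$, and finally use the mean value theorem to locate a subinterval $I'$ of length $\gg x^{-(M+2)}$ around the maximizer $x_0$ on which $|P|\ge\tfrac12|P(x_0)|$. The only cosmetic differences are in how the Taylor remainder is estimated (you cite the standard Lagrange bound directly, the paper writes out the Stirling computation) and in the bookkeeping for $B'$.
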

\begin{proof}
Let $c_0>0$, $K=[c_0x]$ and $C>0$ be such that $\max_{s\in \overline{U}}|s|\leq C$. Then, for every $\xi\in [x,x+1]$, by Stirling's formula we get
\begin{align*}
e^{-s\xi} &= \sum_{l=0}^{K}\frac{(-s\xi)^l}{l!} + O\left(\sum_{l=0}^{\infty}\frac{(xC)^{l+K+1}}{(l+K+1)!}\right)\\
&= \sum_{l=0}^{K}\frac{(-s\xi)^l}{l!} + O\left(\frac{(xC)^{K+1}}{(K+1)!}\sum_{l=0}^{\infty}\frac{(xC)^{l}(K+1)!}{(l+K+1)!}\right)\\
&= \sum_{l=0}^{K}\frac{(-s\xi)^l}{l!} + O\left(e^{xC}\exp\left(-(K+1)\log\left(\frac{K+1}{xC}\right)\right)\right)\\
&= \sum_{l=0}^{K}\frac{(-s\xi)^l}{l!} + O\left(e^{xC}\exp\left(-c_0x\log\left(\frac{c_0}{C}\right)\right)\right).
\end{align*}
Similarly,
\begin{align*}
(-s)e^{-s\xi} &= \sum_{l=0}^{K-1}\frac{(-s)^{l+1}\xi^l}{l!} +  O\left(e^{xC}\exp\left(-(c_0x-1)\log\left(\frac{c_0}{2C}\right)\right)\right).
\end{align*}
Hence, for every $A>0$ there exists sufficiently large $c_0=c_0(A,C)$ such that
\[
e^{-s\xi} = \sum_{l=0}^{K}\frac{(-s\xi)^l}{l!} +  O\left(e^{-x(A+C)}\right)
\]
and
\[
(-s)e^{-s\xi} = \sum_{l=0}^{K-1}\frac{(-s)^{l+1}\xi^l}{l!} +  O\left(e^{-x(A+C)}\right).
\]
for every $\xi\in [x,x+1]$.

Therefore, for $\xi\in[x,x+1]$ we have
\begin{equation}\label{eq:PolyApprox}
\Delta(\xi)=P(\xi)+O(e^{-Ax})\qquad\text{and}\qquad \Delta'(\xi) = P'(\xi) + O(e^{-Ax})
\end{equation}
where $P(\xi)=\sum_{l=0}^K\frac{\xi^l}{l!}\iint_U (-s)^l\overline{g(s)}d\sigma dt$ is a polynomial of degree $\ll x$. 

Let $x_0\in I$ be such that $|P(x_0)| = \max_{\xi\in I}|P(\xi)|$. Then by Corollary \ref{cor:Markov} we get
\begin{equation*}
\max_{\xi\in I}|P'(\xi)|\ll x^{M+2}|P(x_0)|
\end{equation*}
and hence 
\[
|\Delta'(\xi)| = |P'(\xi)|+O(e^{-Ax})\ll x^{M+2}|\Delta(x_0)|+O(x^{M+2}e^{-Ax}).
\]

Therefore, for $\xi\in I$ satisfying $|\xi-x_0|\leq \frac{B'}{x_0^{M+2}}$ with sufficiently small $B'>0$ we have
\begin{equation}\label{eq:meanValue}
\begin{split}
|P(x_0)|-|P(\xi)|&\leq |P(\xi)-P(x_0)|\leq |\xi-x_0|\max_{\xi\in I}|P'(\xi)|\leq \frac{1}{2}|P(x_0)|.
\end{split}
\end{equation}
Therefore, for $\xi\in I':=I\cap \Big[x_0-\frac{B'}{x_0^{M+2}},x_0+\frac{B'}{x_0^{M+2}}\Big]$ it holds
\begin{align*}
\frac{1}{2}|P(x)|\leq \frac{1}{2}|P(x_0)|\leq |P(\xi)|\leq |P(x_0)|,
\end{align*}
and hence, by \eqref{eq:PolyApprox}, the proof is complete.
\end{proof}

\begin{cor}\label{cor:Delta}
Let $U\subset\mathbb{C}$ be open and bounded and $g_j$, $j=1,2,\ldots,m$, be Lebesgue square integrable functions on $U$. For $z\in\mathbb{C}$ we put
\[
\Delta_j(z)=\iint_U e^{-sz}\overline{g_j(s)}d\sigma dt.
\]
Then for every $A>0$ and every $x>1$ there exist $B_1>\cdots>B_m>0$, $x^{(0)}_0=x, x^{(1)}_0,\ldots, x^{(m)}_0$ and intervals $I_j\subset[x,x+1]$ of length $|I_j|\geq \frac{B_j}{x^{2j}}$ such that $x^{(j)}_0\in I_j$, $I_{j+1}\subset I_j$, and for all $\xi\in I_{j}$ we have
\begin{multline*}\label{eq:DeltaEst}
\frac{1}{2}|\Delta_j(x^{(j-1)}_0)|+O\left(e^{-Ax}\right)\leq \frac{1}{2}|\Delta_j(x^{(j)}_0)|+O\left(e^{-Ax}\right)\\
\leq |\Delta_j(\xi)|\leq |\Delta_j(x^{(j)}_0)|+O\left(e^{-Ax}\right).
\end{multline*}
Moreover, for every $t\in I_j$ we have
\[
|\Delta_j'(\xi)|\ll x^{2j}|\Delta_j(x^{(j)}_0)|+O(x^{2j}e^{-Ax}).
\]
\end{cor}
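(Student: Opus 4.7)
The plan is to iterate Lemma \ref{lem:3}, processing one function $\Delta_j$ at each stage and shrinking the working interval as we go. I will produce a nested chain $I_1\supset I_2\supset\cdots\supset I_m$ of subintervals of $[x,x+1]$ together with points $x_0^{(j)}\in I_j$ by induction on $j$, using the convention $x_0^{(0)}:=x$.

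For the base case $j=1$, I apply Lemma \ref{lem:3} directly to $\Delta_1$ with $I=[x,x+1]$ (so $M=0$, $B=1$); this yields $B_1>0$, an interval $I_1$ of length $\geq B_1/x^{2}$, and a point $x_0^{(1)}\in I_1$, the leftmost term in the inequality chain being $\tfrac12|\Delta_1(x)|=\tfrac12|\Delta_1(x_0^{(0)})|$. For the inductive step, I will mimic the proof of Lemma \ref{lem:3} on the subinterval $I_{j-1}$: the Taylor-expansion step there still produces $\Delta_j(\xi)=P_j(\xi)+O(e^{-Ax})$ and $\Delta_j'(\xi)=P_j'(\xi)+O(e^{-Ax})$ uniformly in $\xi\in[x,x+1]$, where $P_j$ is a polynomial of degree $K\ll x$ whose degree and error term depend only on $A$ and $C=\max_{s\in\overline{U}}|s|$ and are therefore uniform in $j$. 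I then define $x_0^{(j)}\in I_{j-1}$ as the maximizer of $|P_j|$ over $I_{j-1}$; since $x_0^{(j-1)}\in I_{j-1}$ by the inductive hypothesis, the leftmost inequality $|\Delta_j(x_0^{(j-1)})|\leq|\Delta_j(x_0^{(j)})|+O(e^{-Ax})$ is immediate.

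Corollary \ref{cor:Markov} applied to $P_j$ on $I_{j-1}$ supplies $\max_{\xi\in I_{j-1}}|P_j'(\xi)|\ll x^{2j}|P_j(x_0^{(j)})|$, the factor $x^{2j}$ coming from $n^2/|I_{j-1}|\ll x^{2}\cdot x^{2(j-1)}/B_{j-1}$. I then choose $B_j\in(0,B_{j-1})$ small enough that $|\xi-x_0^{(j)}|\leq B_j/x^{2j}$ forces $|P_j(\xi)-P_j(x_0^{(j)})|\leq\tfrac12|P_j(x_0^{(j)})|$ by the mean value theorem, and set $I_j:=I_{j-1}\cap[x_0^{(j)}-B_j/x^{2j},\,x_0^{(j)}+B_j/x^{2j}]$. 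Transferring the resulting estimates on $|P_j|$ back through $\Delta_j=P_j+O(e^{-Ax})$ produces the full inequality chain on $I_j$, and the derivative bound follows at once from the Markov estimate combined with $\Delta_j'=P_j'+O(e^{-Ax})$. The only mildly delicate point is ensuring $|I_j|\geq B_j/x^{2j}$ when $x_0^{(j)}$ happens to lie near an endpoint of $I_{j-1}$; this is absorbed into a further harmless halving of $B_j$, and the remaining verifications (including the monotonicity $B_1>\cdots>B_m$ and the uniformity of the polynomial approximation across all $j$) are routine.
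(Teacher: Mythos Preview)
Your argument is correct and follows essentially the same route as the paper: iterate Lemma~\ref{lem:3}, applying it successively to $\Delta_1,\Delta_2,\ldots,\Delta_m$ on the shrinking chain of intervals $[x,x+1]\supset I_1\supset\cdots\supset I_m$, picking up an extra factor of $x^2$ in the length bound at each step. The only notable difference is that the paper invokes Lemma~\ref{lem:3} as a black box at each stage, whereas you open it up and rerun the polynomial-approximation-plus-Markov argument directly on $I_{j-1}$. Your choice is in fact the cleaner one: as literally stated, Lemma~\ref{lem:3} only compares $|\Delta(x_0)|$ against $|\Delta|$ at the \emph{left endpoint} of the input interval, whereas the corollary needs the comparison at $x_0^{(j-1)}$, an arbitrary point of $I_{j-1}$. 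The paper glosses over this, relying implicitly on the fact (clear from the proof of Lemma~\ref{lem:3}) that $x_0$ maximizes $|P|$ over all of $I$, so the leftmost inequality holds with $x$ replaced by any point of $I$; you make this explicit.
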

\begin{proof}
Firstly, let us apply the last lemma for $\Delta_1(z)$ and the interval $I_0:=[x,x+1]$. Then there is an interval $I_1\subset I_0$ of length $|I_1|\geq \frac{B_1}{x^{2}}$ and $x^{(1)}_0\in I_1$ such that for $\xi\in I_1$ we have
\[
\frac{1}{2}|\Delta_1(x)|+O\left(e^{-Ax}\right)\leq \frac{1}{2}|\Delta_1(x^{(1)}_0)|+O\left(e^{-Ax}\right)\leq |\Delta_1(\xi)|\leq |\Delta_1(x^{(1)}_0)|+O\left(e^{-Ax}\right)
\]
and
\[
|\Delta_1'(\xi)|\ll x^{2}|\Delta_1(x^{(1)}_0)|+O(x^{2}e^{-Ax}).
\]

Next, we apply again the last lemma for $\Delta_2(z)$ and the interval $I_1=[x',x'+\frac{B_1}{x^{2}}]\subset[x,x+1]$. Thus there is an interval $I_2\subset I_1$ of length $|I_2|\geq \frac{B'_2}{x'^{4}}\geq \frac{B_2}{x^{4}}$ and $x^{(2)}_0\in I_2$ such that
\[
\frac{1}{2}|\Delta_2(x^{(1)}_0)|+O\left(e^{-Ax}\right)\leq \frac{1}{2}|\Delta_2(x^{(2)}_0)|+O\left(e^{-Ax}\right)\leq |\Delta_2(\xi)|\leq |\Delta_2(x^{(2)}_0)|+O\left(e^{-Ax}\right)
\]
and
\[
|\Delta_2'(\xi)|\ll x^{4}|\Delta_2(x^{(2)}_0)|+O(x^{4}e^{-Ax}).
\]
Next, repeating the application of the last lemma for each function $\Delta_j$, $3\leq j\leq m$, completes the proof.
\end{proof}

\begin{proof}[Proof of Lemma \ref{lem:main}]
Without loss of generality we can assume that $g_1$ is a non-zero element, since the fact that $\underline{g}\ne 0$ implies that at least one of $g_j$'s is a non-zero element.

Obviously, $\Delta_1(z)\ll e^{C|z|}$ for some positive constant $C$ depending on $U$. Let us recall that for all $s\in U$ we have $1/2<\sigma_1<\Re  s< \sigma_2<1$. Then for sufficiently small $\eta=\eta(U)>0$ and for all complex $z$ with $|\arg(-z)|\leq \eta$ we have
\[
|e^{\sigma_2 z}\Delta_1(z)|\ll 1.
\]
Moreover, $\Delta_1\not\equiv 0$, since otherwise for every positive integer $k$ we have $0 = \Delta_1^{(k)}(0) = \iint_U (-s)^k \overline{g_1(s)}d\sigma dt$, which means that $g_1$ is orthogonal to all polynomials in $L_2(U)$ and we get contradiction to the fact that $g_1$ is a non-zero element and the linear space of polynomials is dense in the Bergman space $H_1$ (see for example \cite[Theorem 7.2.2]{QQ}). Hence, by \cite[Lemma 3]{KK}, which proof based on the Phragm\'en-Lindel\"of theorem, there is a real sequence $x_k$ tending to $\infty$ such that
\[
|\Delta_1(x_k)|\gg e^{-\sigma_2 x_k}.
\]

Let us fix $k$ and put $x=x_k$. Hence, using Corollary \ref{cor:Delta}, for every $A>0$ and $x=x_k$ there exist $B_1>\cdots>B_m>0$, $x^{(0)}_0=x, x^{(1)}_0,\ldots, x^{(m)}_0$ and intervals $I_j\subset[x,x+1]$ of length $|I_j|\geq \frac{B_j}{x^{2j}}$ such that $x^{(j)}_0\in I_j$, $I_{j+1}\subset I_j$, and for all $\xi\in I_{j}$ we have
\begin{multline}\label{eq:Estimate}
\frac{1}{2}|\Delta_j(x^{(j-1)}_0)|+O\left(e^{-Ax}\right)\leq \frac{1}{2}|\Delta_j(x^{(j)}_0)|+O\left(e^{-Ax}\right)\\
\leq |\Delta_j(\xi)|\leq |\Delta_j(x^{(j)}_0)|+O\left(e^{-Ax}\right)
\end{multline}
and
\begin{equation}\label{eq:DerUpper}
|\Delta_j'(\xi)|\ll x^{2j}|\Delta_j(x^{(j)}_0)|+O(x^{2j}e^{-Ax}).
\end{equation}

Now let $I:=I_m=\Big[x',x'+\frac{B_m}{x'^{2m}}\Big]\subset[x,x+1]$. Since $I\subset I_j$ for every $j=1,2,\ldots,m$, the above inequalities hold also for all $\xi\in I$.

In particular, since $x^{(0)}_0 = x$, for $\xi\in I$ we have
\[
|\Delta_1(\xi)|\geq \frac{1}{2}|\Delta_1(x^{(0)}_0)|\gg e^{-\sigma_2 x}.
\]

Moreover, for every $j=1,2,\ldots,m$ we have
\[
|\Delta_j(\xi)|\ll e^{-\sigma_1 x}\qquad (\xi\in [x,x+1]).
\]

Now, let ${\sum_p}^*$ denote the sum over primes $p\in \Big[e^{x'}, e^{x'+\frac{B_m}{x'^{2m}}}\Big]$. Then for these $p$ we have $\log p\in I$.

It is easy to notice that
\begin{align*}
S(x):&={\sum_p}^* \left |a_{L_1}(p)\Delta_1(\log p)+\cdots+a_{L_m}(p)\Delta_m(\log p)\right|^2\\
&=\sum_{j=1}^m{\sum_p}^* |a_{L_j}(p)|^2|\Delta_j(\log p)|^2 \\
&\quad +\sum_{1\leq k\ne l\leq m}{\sum_p}^* a_{L_k}(p)\overline{a_{L_l}(p)}\Delta_k(\log p)\overline{\Delta_l(\log p)}.
\end{align*}

Using \eqref{eq:SelbergSB} it is easy to prove that for any $1\leq k\ne l<m$ we have
\begin{align*}
\phi_{k,l}(u):\!&=\sum_{p\leq u}a_{L_k}(p)\overline{a_{L_l}(p)} =\sum_{j=2}^{2m+1}\frac{c^{(k,l)}_j u}{(\log u)^j} + O\left(\frac{u}{(\log u)^{2m+2}}\right).
\end{align*}
For $\log u\in I$, by \eqref{eq:DerUpper}, we get
\[
\frac{d}{du}\Delta_j(\log u) = \frac{1}{u}\Delta'_j(\log u)\ll\frac{x^{2m}}{u}|\Delta_j(x^{(j)}_0)|+O(x^{2m}e^{-Ax})
\]
and, since $\overline{\Delta_j(\log u)}=\overline{\langle u^{-s},g_j(s)\rangle} = \langle u^{-\overline{s}},\overline{g_j(s)}\rangle$, we have
\begin{align*}
\frac{d}{du}\overline{\Delta_j(\log u)} &= \frac{1}{u}\iint_U -\overline{s}u^{-\overline{s}}g_j(s)d\sigma dt = \frac{1}{u}\overline{\Delta'_j(\log u)}\\
&\ll\frac{x^{2m}}{u}|\Delta_j(x^{(j)}_0)|+O(x^{2m}e^{-Ax}).
\end{align*}

Hence, using partial summation and \eqref{eq:Estimate}, gives
\begin{align*}
&\sum_{1\leq k\ne l\leq m}{\sum_p}^* a_{L_k}(p)\overline{a_{L_l}(p)}\Delta_k(\log p)\overline{\Delta_l(\log p)}\\
&\qquad\qquad=\sum_{1\leq l\ne k\leq m}\int_{X_1}^{X_2}\Delta_{k}(\log u)\overline{\Delta_{l}(\log u)}d\phi_{k,l}(u)\\
&\qquad\qquad\ll \frac{e^x}{x^{2m+2}}\sum_{1\leq k\ne l\leq m}|\Delta_k(x^{(k)}_0)||\Delta_l(x^{(l)}_0)|+O(e^{(-A+1-\sigma_1)x})\\
&\qquad\qquad\quad+\sum_{1\leq k\ne l\leq m}\int_{X_1}^{X_2}\frac{u}{(\log u)^{2m+2}}\left|\left(\Delta_k(\log u)\overline{\Delta_l(\log u)}\right)'\right|du\\
&\qquad\qquad\ll \frac{e^x}{x^{2m+2}}\sum_{1\leq j\leq m}|\Delta_j(x^{(j)}_0)|^2+O(e^{(-A+1-\sigma_1)x})\\
&\qquad\qquad\quad+x^{2m}\sum_{1\leq j\leq m}|\Delta_j(x^{(j)}_0)|^2 \int_{X_1}^{X_2}\frac{1}{(\log u)^{2m+2}}du\\
&\qquad\qquad\ll \frac{e^x}{x^{2m+2}}\sum_{1\leq j\leq m}|\Delta_j(x^{(j)}_0)|^2+O(e^{(-A+1-\sigma_1)x}):=E(x)
\end{align*}
where $X_1=e^{x'}$, $X_2=e^{x'+\frac{B_m}{x'^{2m}}}$.

Therefore, by \eqref{eq:SelbergSA}, we get
\begin{align*}
S(x) &= {\sum_p}^* \sum_{j=1}^m|a_{L_j}(p)|^2|\Delta_j(\log p)|^2 + E(x)\\
&\gg \sum_{j=1}^m\left(|\Delta_j(x^{(j)}_0)|^2 + |\Delta_j(x^{(j)}_0)|O(e^{-Ax}) + O(e^{-2Ax})\right){\sum_p}^*|a_{L_j}(p)|^2 + E(x)\\
&\gg \frac{e^x}{x^{2m+1}} \sum_{j=1}^m |\Delta_j(x^{(j)}_0)|^2 + O(e^{(-A+1-\sigma_1)x})+E(x)\\
&\gg \frac{e^x}{x^{2m+1}} \sum_{j=1}^m |\Delta_j(x^{(j)}_0)|^2 + O(e^{(-A+1-\sigma_1)x})+O\left(\frac{e^x}{x^{2m+2}}\sum_{j=1}^m|\Delta_j(x^{(j)}_0)|^2\right)
\\
&\gg \frac{e^x}{x^{2m+1}} \left(\sum_{j=1}^m |\Delta_j(x^{(j)}_0)|\right)^2 + O(e^{(-A+1-\sigma_1)x})
\\
&\gg \frac{e^{(1-\sigma_2)x}}{x^{2m+1}} \sum_{j=1}^m |\Delta_j(x^{(j)}_0)| + O(e^{(-A+1-\sigma_1)x}).
\end{align*}

On the other hand, since $a_{L_j}(p)\ll p^\varepsilon$ for every $\varepsilon >0$, we have
\begin{align*}
S(x)&\ll e^{\varepsilon x}{\sum_p}^* \left|\sum_{j=1}^m a_{L_j}(p)\Delta_j(\log p)\right|\sum_{j=1}^m |\Delta_j(\log p)|\\
&\ll e^{\varepsilon x}{\sum_p}^* \left|\sum_{j=1}^m a_{L_j}(p)\Delta_j(\log p)\right|\sum_{j=1}^m |\Delta_j(x^{(j)}_0)|+O(e^{(-A+1+\varepsilon-\sigma_1)x}).
\end{align*}
Finally, dividing the last inequalities by $\sum_{j=1}^m |\Delta_j(x^{(j)}_0)|$ and taking sufficiently large $A>0$ gives
\[
{\sum_p}^* \left|\sum_{j=1}^m a_{L_j}(p)\Delta_j(\log p)\right|\gg \frac{e^{x(1-\sigma_2-\varepsilon)}}{x^{2m+1}}
\]
and the proof is complete.\end{proof}

\section{Proof of Theorem \ref{th:main}}

Now we shall use the denseness lemma proved above, to give the proof of joint universality for a collection of $L$-functions $L_1,\ldots,L_m$ from the Selberg class. In order to do it we need a joint limit theorem for the following probabilistic measure on $(H(D)^m,\mathcal{B}(H(D)^m))$, where $\mathcal{B}(H(D)^m)$ denotes the class of Borel sets of $H(D)^m$. Basically, the proof of the joint limit theorem and the remaining steps of the proof of Theorem \ref{th:main} are based on  \cite[Chapter 12]{S}, where Steuding proved conditional joint universality (see \cite[Theorem 12.5]{S}) for a slightly different class of $L$-functions. The modification needed are easy and straightforward. Nevertheless, we give a sketch of the proof for sake of completeness.

For 
\[
\underline{L}(s) = (L_1(s),\ldots,L_m(s))
\]
define a probabilistic measure $\mathbf{P}_T^{\underline{L}}$ by
\[
\mathbf{P}_T^{\underline{L}}(A) = \meas{\underline{L}(s+i\tau)\in A},\qquad \text{for}\ A\in\mathcal{B}(H(D)^m).
\]

Moreover, it is known that 
\[
\underline{L}(s,\omega) :=(L_1(s,\omega),\ldots,L_m(s,\omega)),\qquad(\omega\in\Omega),
\]
is an $H(D)^m$-valued random element on $(\Omega,\mathcal{B}(\Omega),m_H)$. Therefore, denoting the distribution of $\underline{L}(s,\omega)$ by $\mathbf{P}^{\underline{L}}$ on $(H(D)^m,\mathcal{B}(H(D)^m))$, gives the following joint limit theorem.
\begin{theorem}[{\cite[Theorem 12.1]{S}}]\label{th:mainLimit}
For $L_1,\ldots,L_m\in \Sclass$ the probability measure $\mathbf{P}_T^{\underline{L}}$ converges weakly to $\mathbf{P}^{\underline{L}}$, as $T\to\infty$.
\end{theorem}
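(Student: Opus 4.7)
The plan is to follow the classical Bagchi--Steuding strategy for joint limit theorems, proceeding in three stages. First, I would establish equidistribution on the torus. Consider the map $\phi_T\colon[0,T]\to\Omega$, $\phi_T(\tau)=(p^{-i\tau})_p$. The linear independence of $\{\log p\}_p$ over $\mathbb{Q}$, combined with Weyl's equidistribution criterion on the compact topological abelian group $\Omega$, implies that the image measures $Q_T(A)=\meas{\phi_T(\tau)\in A}$ converge weakly to the Haar measure $m_H$ on $\Omega$ as $T\to\infty$.

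Next, for $N\in\mathbb{N}$ and $1\le j\le m$ set $L_{j,N}(s,\omega)=\sum_{n\leq N} a_{L_j}(n)\omega(n)n^{-s}$ and $\underline{L}_N(s,\omega)=(L_{1,N}(s,\omega),\ldots,L_{m,N}(s,\omega))$. The map $\omega\mapsto \underline{L}_N(\cdot,\omega)$ from $\Omega$ to $H(D)^m$ is continuous, so by the continuous mapping theorem the distribution on $H(D)^m$ of $\underline{L}_N(\cdot+i\tau)$ under normalised Lebesgue measure on $[0,T]$ converges weakly to the distribution of $\underline{L}_N(\cdot,\omega)$ under $m_H$. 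To pass from the truncations to the full $L$-functions, fix a metric $\rho$ inducing the topology of $H(D)^m$ and establish the two-sided tail estimate
\[
\lim_{N\to\infty}\limsup_{T\to\infty}\frac{1}{T}\int_0^T \rho\bigl(\underline{L}(\cdot+i\tau),\underline{L}_N(\cdot+i\tau)\bigr)\,d\tau=0,
\]
together with its almost-sure analogue on the $\omega$-side. The deterministic bound follows from Potter's mean-square theorem (quoted in the introduction), which applies because $D\subset\{\sigma>\sigma_{\textup m}(L_j)\}$ for every $j$; the probabilistic bound follows from the $L^2(\Omega,m_H)$-orthonormality of $\{\omega(n)\}_{n\in\mathbb{N}}$ combined with the Ramanujan axiom, yielding almost-sure convergence of $L_{j,N}(s,\omega)$ to $L_j(s,\omega)$ in $H(D)$. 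Inserting these into the standard weak-convergence argument of Billingsley, exactly as carried out in \cite[Chapter 12]{S}, then yields the claimed weak convergence of $\mathbf{P}_T^{\underline{L}}$ to $\mathbf{P}^{\underline{L}}$.

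The main technical obstacle is tightness of $\{\mathbf{P}_T^{\underline{L}}\}_T$ in $H(D)^m$. Via Prokhorov's theorem and Montel's normality criterion for $H(D)$, tightness reduces to uniform bounds for $\tfrac{1}{T}\int_0^T|L_j(\sigma+i\tau)|^2\,d\tau$ on compact subsets of $D$, again provided by Potter's theorem; this is precisely where the choice of $D$ to lie to the right of each abscissa $\sigma_{\textup m}(L_j)$ enters in an essential way. It is worth emphasising that this limit theorem uses only the Selberg class axioms together with the mean-square assumption on the strip $D$, and not the strengthened estimates \eqref{eq:SelbergSA}--\eqref{eq:SelbergSB}; those hypotheses enter the proof of Theorem~\ref{th:main} only later, through Proposition~\ref{prop:dense}.
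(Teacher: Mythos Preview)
Your outline is correct and follows exactly the Bagchi--Steuding strategy that the paper invokes by citing \cite[Theorem 12.1]{S}; the paper gives no independent proof of this limit theorem but simply refers to \cite[Chapter 12]{S}, whose argument is precisely the torus equidistribution, truncated-series, mean-square approximation, and Prokhorov/Montel tightness scheme you describe. Your remark that only the Selberg-class axioms and the mean-square bound on $D$ are needed here (and not \eqref{eq:SelbergSA}--\eqref{eq:SelbergSB}) is also accurate.
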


The immediate consequence of the above theorem is the following result. 
\begin{cor}
Let $L_1,\ldots,L_m\in\Sclass$ and $D_M:=\{s\in\mathbb{C}:\sigma_1<\Re(s)<\sigma_2,\ |t|<M\}$ for any $M>0$. Then the probability measure
\[
\mathbf{Q}_T^{\underline{L}}(A):=\meas{\underline{L}(s+i\tau)\in A},
\]
for $A\in \mathcal{B}(H(D_M)^m)$, converges weakly, as $T\to\infty$, to 
\[
\mathbf{Q}^{\underline{L}}(A):=m_H\left\{\omega\in\Omega:\underline{L}(s,\omega)\in A\right\}
\]
for $A\in \mathcal{B}(H(D_M)^m)$.
\end{cor}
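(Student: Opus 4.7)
The approach is to deduce this corollary from Theorem~\ref{th:mainLimit} by a straightforward application of the continuous mapping theorem. Since $D_M\subset D$, restriction of analytic functions defined on $D$ to the smaller strip $D_M$ yields a natural map $r:H(D)^m\to H(D_M)^m$ given by $r(f_1,\ldots,f_m)=(f_1|_{D_M},\ldots,f_m|_{D_M})$.

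First I would verify that $r$ is continuous when both spaces carry the topology of uniform convergence on compacta. By definition of this topology, it suffices to observe that every compact subset of $D_M$ is automatically a compact subset of $D$, so uniform convergence on compacta of $D$ forces uniform convergence on compacta of $D_M$, componentwise. Hence $r$ is continuous, and in particular Borel measurable.

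Next I would identify the pushforward measures. For any $A\in\mathcal{B}(H(D_M)^m)$,
\[
\mathbf{P}_T^{\underline{L}}(r^{-1}(A))=\meas{r(\underline{L}(s+i\tau))\in A}=\mathbf{Q}_T^{\underline{L}}(A),
\]
and, analogously, $\mathbf{P}^{\underline{L}}(r^{-1}(A))=m_H\{\omega\in\Omega:r(\underline{L}(s,\omega))\in A\}=\mathbf{Q}^{\underline{L}}(A)$. Applying the continuous mapping theorem to the weak convergence $\mathbf{P}_T^{\underline{L}}\to \mathbf{P}^{\underline{L}}$ provided by Theorem~\ref{th:mainLimit}, with the continuous map $r$, yields that $\mathbf{P}_T^{\underline{L}}\circ r^{-1}$ converges weakly to $\mathbf{P}^{\underline{L}}\circ r^{-1}$; that is, $\mathbf{Q}_T^{\underline{L}}$ converges weakly to $\mathbf{Q}^{\underline{L}}$, as claimed.

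Since the argument is essentially immediate from general principles of weak convergence, I do not anticipate any substantive obstacle; the only point that requires care is the continuity of the restriction map, which, as noted above, follows directly from the definition of the compact-open topology on spaces of analytic functions.
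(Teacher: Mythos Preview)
Your argument is correct and is precisely the intended one: the paper states the corollary as an ``immediate consequence'' of Theorem~\ref{th:mainLimit} without giving details, and the continuous mapping theorem applied to the restriction map $r:H(D)^m\to H(D_M)^m$ is exactly the implicit reasoning.
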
 

Hence, in order to prove Theorem \ref{th:main} it remains to determine the support of the measure $\mathbf{Q}_T^{\underline{L}}$, which is implied by Hurwitz's classical result on zeros of uniformly convergent sequence of functions. Let us recall that the support of the probabilistic space $(S,\mathcal{B}(S),\mathbf{P})$ is the minimal closed set with measure $1$. It means that the support consists of all elements $x\in S$ satisfying $\mathbf{P}(V)>0$ for every neighborhood $V$ of $x$. By using \eqref{eq:Dense}, \cite[Lemma 12.7]{S} and the definition of support, and modifying the proof of \cite[Lemma 12.6]{S}, we have the following lemma.

\begin{lemma} 
The support of the measure $\mathbf{Q}_T^{\underline{L}}$ is the set
\[
\mathbf{S}_M := \{\underline{\varphi}:=(\varphi_1,\ldots,\varphi_m)\in H(D_M)^m: \underline{\varphi(s)}\ne 0\text{ for }s\in D_M,\text{ or }\underline{\varphi}\equiv 0\}.
\]
\end{lemma}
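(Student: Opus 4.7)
The plan is to prove both inclusions that identify the support. Because the corollary of Theorem \ref{th:mainLimit} gives weak convergence of $\mathbf{Q}_T^{\underline{L}}$ to the limit measure $\mathbf{Q}^{\underline{L}}$ induced by the random element $\underline{L}(s,\omega)$, it suffices to determine the support of $\mathbf{Q}^{\underline{L}}$; then \cite[Lemma 12.7]{S} and a routine weak-convergence argument transfer the conclusion to $\mathbf{Q}_T^{\underline{L}}$. Recall that $\operatorname{supp}\mathbf{Q}^{\underline{L}}$ is the smallest closed set of full measure, equivalently the set of $\underline{\varphi}$ every neighborhood of which has positive $\mathbf{Q}^{\underline{L}}$-mass.

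For $\mathbf{S}_M\subseteq\operatorname{supp}\mathbf{Q}^{\underline{L}}$, I would take any componentwise non-vanishing $\underline{\varphi}=(\varphi_1,\ldots,\varphi_m)\in H(D_M)^m$. Since $D_M$ is simply connected, each $\varphi_j$ admits an analytic logarithm $\psi_j\in H(D_M)$ with $\varphi_j=\exp(\psi_j)$. Applying \eqref{eq:Dense} (refined by the standard compact exhaustion of $D_M$) to the tuple $\underline{\psi}$, one produces, for every $\varepsilon>0$, some $\omega\in\Omega$ with $\sum_p g_{p,j}(s,\omega(p))$ approximating $\psi_j$ within $\varepsilon$ uniformly on the chosen compact set. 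Exponentiating and using the a.s.\ identity $L_j(s,\omega)=\exp\bigl(\sum_p g_{p,j}(s,\omega(p))\bigr)$ (which holds for $m_H$-a.e.\ $\omega$ on $D$) converts this into uniform approximation of $\varphi_j$ by $L_j(\cdot,\omega)$, showing that every neighborhood of $\underline{\varphi}$ carries positive $\mathbf{Q}^{\underline{L}}$-measure. The zero tuple $\underline{0}\in\mathbf{S}_M$ is then obtained as a limit of non-vanishing tuples (for instance $\varepsilon_k\underline{1}\to\underline{0}$) already in the support, using that the support is closed.

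For the reverse inclusion I would invoke Hurwitz's theorem: a locally uniform limit of analytic functions that are nowhere zero on a domain $G$ is itself either zero-free or identically zero. This implies that $\mathbf{S}_M$ is closed in $H(D_M)^m$. On the other hand, by the Euler product axiom and almost sure convergence of the Euler product for $L_j(s,\omega)$, for $m_H$-almost every $\omega\in\Omega$ each factor is non-zero and $L_j(s,\omega)$ is non-vanishing on $D\supseteq D_M$. Thus $\mathbf{Q}^{\underline{L}}(\mathbf{S}_M)=1$, and since the support is the smallest closed set of full measure, $\operatorname{supp}\mathbf{Q}^{\underline{L}}\subseteq\mathbf{S}_M$.

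The delicate step is the first inclusion: the denseness statement \eqref{eq:Dense} was phrased for the fixed compacta $K_j\subset D$, whereas the support analysis requires approximation on arbitrary compacta of the strip $D_M$, and one must control the passage from additive smallness of $\sum_p g_{p,j}(s,\omega(p))-\psi_j(s)$ to multiplicative smallness of $L_j(\cdot,\omega)/\varphi_j-1$ despite the tails of the exponential series. This is exactly what the modification of \cite[Lemma 12.6]{S} referenced in the text accomplishes, and once that step is in place the remainder of the argument is a straightforward combination of Hurwitz's theorem and the weak convergence in Theorem \ref{th:mainLimit}.
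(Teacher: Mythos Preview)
Your overall architecture (two inclusions, pass to logarithms, use Hurwitz for closedness, use the a.s.\ Euler product for full measure) matches the paper's route via \cite[Lemmas 12.6--12.7]{S}. However, there is a genuine gap in your argument for the inclusion $\mathbf{S}_M\subseteq\operatorname{supp}\mathbf{Q}^{\underline{L}}$, and it stems from a misplacement of \cite[Lemma 12.7]{S}.

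You invoke \eqref{eq:Dense} to produce, for each $\varepsilon>0$, \emph{one} $\omega\in\Omega$ whose log-series approximates $\underline{\psi}$, and then conclude that ``every neighborhood of $\underline{\varphi}$ carries positive $\mathbf{Q}^{\underline{L}}$-measure.'' That inference is not valid: exhibiting a single $\omega$ (which, moreover, need not lie in the $m_H$-full set on which the identity $L_j(s,\omega)=\exp\sum_p g_{p,j}(s,\omega(p))$ holds) says nothing about the $m_H$-measure of the set of such $\omega$. The bridge from ``the set of convergent series $\{\sum_p\underline{g}_p(s,a_p):|a_p|=1\}$ is dense'' to ``the support of the random element $\omega\mapsto\sum_p\underline{g}_p(s,\omega(p))$ is all of $H(D_M)^m$'' is precisely the content of \cite[Lemma 12.7]{S}: for a series of independent $H(D_M)^m$-valued random elements that converges a.s., the support of the sum equals the closure of the set of convergent series of elements from the individual supports. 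That is where Lemma 12.7 belongs in the proof---not, as you wrote, as a device to ``transfer the conclusion to $\mathbf{Q}_T^{\underline{L}}$'' via weak convergence. Once Lemma 12.7 is applied at the log level together with Proposition \ref{prop:dense}/\eqref{eq:Dense}, exponentiation (the modification of \cite[Lemma 12.6]{S}) gives the inclusion; your reverse inclusion via Hurwitz and the a.s.\ non-vanishing of $L_j(s,\omega)$ is fine.
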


Now, we are ready to complete the proof of Theorem \ref{th:main}.

\begin{proof}[Proof of Theorem \ref{th:main}]
By Mergelyan's approximation theorem it suffices (see the proof of \cite[Theorem 12.5]{S}) to assume that $g_1,\ldots,g_m$ have non-vanishing analytic continuation to $D_M$, where $M>0$ is such that $K_1,\ldots,K_m\subset D_M$. Then, by the last lemma, $(g_1,\ldots, g_m)$ is an element of the support $\mathbf{S}_M$. Therefore, using the fact that $\mathbf{Q}_T^{\underline{L}}$ converges weakly to $\mathbf{Q}^{\underline{L}}$ and the fact that the set $\Phi$ of functions $\underline{\varphi}\in H(D_M)^m$ satisfying 
\[
\max_{1\leq j\leq m}\max_{s\in K_j}|\varphi_j(s)-g_j(s)|<\varepsilon
\]
is open, yields
\begin{multline*}
\liminf_{T\to\infty}\meas{\max_{1\leq j\leq m}\max_{s\in K_j}|L_j(s+i\tau)-g_j(s)|<\varepsilon}\\
=\liminf_{T\to\infty}\mathbf{Q}_T^{\underline{L}}(\Phi)\geq \mathbf{Q}^{\underline{L}}(\Phi)>0,
\end{multline*}
which completes the proof.
\end{proof}

\section{Examples}\label{sec:Ex}

In this section we give examples of $L$-functions from analytic number theory satisfying Selberg's conjecture, and, particularly, the assumptions of Theorem \ref{th:main}. 

Let us start with a general discussion about joint universality of the Riemann zeta function $\zeta(s)$ and $L$-function $L(s)$ from the Selberg class. In this case, it suffices to assume that $L(s)$ satisfies $\eqref{eq:SelbergSA}$ and 
\begin{equation*}
\sum_{p\leq x} a_L(p) \ll \frac{x}{(\log x)^A},\qquad\text{for arbitrary $A>0$}.
\end{equation*}
It is well known, that there is a strong relation between the error term in the above estimation and zero-free region of $L(s)$. For example, \cite[Theorem 5.13]{IK} states that the prime number theorem for general $L$-function holds under the assumption of existence of the zero-free region. More precisely, one can deduce that for any function $1\ne L\in\Sclass$ with polynomial Euler product we have
\[
\sum_{p\leq x} a_L(p) \ll m_L \li x+ O\left(xe^{-c'\sqrt{\log x}}\right)\qquad\text{for some $c'>0$},
\]
provided there exists $c>0$ such that 
\begin{equation}\label{eq:zeroFree}
L(\sigma+it)\ne 0\qquad\text{for \ $\sigma> 1-\frac{c}{\log(|t|+2)}$, \ $t\in\mathbb{R}$,}
\end{equation} 
except a real zero $\beta<1$. Therefore, we can easily deduce joint universality of the Riemann zeta function $\zeta(s)$ and any entire $L$-function from the Selberg class  with zero-free region of the form \eqref{eq:zeroFree}. It means that, for example, we can show $\zeta(s)$ and any Hecke $L$-function $L_{\mathbf{K}}(s;\chi)$ associated to a finite extension $\mathbf{K}$ of $\mathbb{Q}$ and a non-principle primitive gr\"ossencharacker $\chi$ are jointly universal in the strip $\sigma_{\textup{m}}(L_{\mathbf{K}}(s;\chi))<\sigma<1$. Similarly, we can show that the Riemann zeta function and Artin $L$-function associated to a finite Galois extension are jointly universal. The last example of this kind can be delivered by the theory of classical automorphic $L$-functions. For instance, the normalized $L$-function $L(s,f)$ associated to holomorphic primitive cusp form. Here, we refer to Iwaniec and Kowalski \cite[Chapter 5]{IK} for the proofs of needed prime number theorems for Hecke, Artin and automorphic $L$-functions and more examples of $L$-functions jointly universal with the Riemann zeta function.

Next, consider the joint universality property for $\zeta(s)$ and $L$-function $L(s)$ with a pole at $s=1$ of order $m_L$ satisfying $0<m_L<d_L$. Then it turns out that instead of \eqref{eq:SelbergSB} it suffices to assume the truth of Selberg's conjecture $\eqref{eq:SelbergB}$ with $R(x)\ll 1$ and the existence of a zero-free region for $L(s)$. Indeed, it is well known  (see \cite{CG} or \cite[Theorem 2.4.1]{FHIK}) that every function in $\Sclass$ can be factored into primitive elements. Let us  recall that $F\in\Sclass$ is primitive if $F=F_1F_2$ for $F_1,F_2\in\Sclass$ implies $F_1=1$ or $F_2=1$. Furthermore, Selberg's conjecture \eqref{eq:SelbergB} with $R(x)\ll 1$ implies that the Riemann zeta function is the only primitive element of $\Sclass$ with a pole (see \cite{CG} or \cite[Theorem 2.5.2]{FHIK}). More precisely, under Selberg's Conjeture \ref{conj:Selberg}, every given function $L\in\Sclass$ with a pole at $s=1$ of order $m_L$ can be factored into $m_L$-th power of $\zeta(s)$ and an entire function from $\Sclass$. Therefore, assuming \eqref{eq:zeroFree} for a given $L\in\Sclass$ with $0<m_L<d_L$ and recalling again \cite[Theorem 5.13]{IK} gives that we can factor $L(s)$ into $\zeta(s)^{m_L}$ and an entire function $1\ne L^*(s)\in\Sclass$, which, obviously, has no zeros at least in the same region as $L(s)$ and satisfies \eqref{eq:SelbergSB}. Moreover, $L^*$ satisfies \eqref{eq:SelbergSA} as $L$ does, since one can easily observe that Selberg's conjecture \eqref{eq:SelbergB} with $R(x)\ll 1$ gives
\begin{align*}
m_l\log\log x + O(1)&= \sum_{p\leq x}\frac{|a_L(p)|^2}{p} = \sum_{p\leq x}\frac{|a_{\zeta^{m_L}}(p)+a_{L^*}(p)|^2}{p}\\
&=\sum_{p\leq x}\frac{|a_{\zeta^{m_L}}(p)|^2}{p} + \sum_{p\leq x}\frac{|a_{L^*}(p)|^2}{p} + O(1)\\
&=m_L\log\log x + \sum_{p\leq x}\frac{|a_{L^*}(p)|^2}{p} + O(1).
\end{align*}
Since, additionally, $L^*$ is entire, we can show, by the previous reasoning, that $\zeta(s)$ and $L^*(s)$ are jointly universal in the strip $\sigma_{\textup{m}}(L^*)<\sigma<1$. Thus, it is easy to see that $\zeta(s)$ and $L(s)$ are jointly universal in the same strip, provided $L(s)$ satisfies \eqref{eq:SelbergSA}, \eqref{eq:zeroFree} and  Selberg's conjecture \eqref{eq:SelbergB} holds for every $L$-function with $R(x)\ll 1$.

As an example of application of this observation, we can consider Dedekind zeta function $\zeta_{\mathbf{K}}(s)$ associated to any algebraic number field $\mathbf{K}$. Then it is known that \eqref{eq:zeroFree}, \eqref{eq:SelbergSA} and \eqref{eq:SelbergSB} hold for any algebraic number field (cf. \cite[Section 5.10]{IK}). Hence, $\zeta_{\mathbf{K}}$ can be written as $\zeta(s)L^*(s)$, which implies the joint universality theorem for $\zeta(s)$ and $\zeta_{\mathbf{K}}(s)$ in the strip $\sigma_{\textup m}(L^*)<\sigma<1$ under the assumption of Selberg's orthonormality conjecture.

Let us note that usually the abscissa of the mean-square is smaller for $L$-functions of smaller degree $d_L$, namely \cite[Corollary 6.11]{S} says that $\sigma_{\textup m}(L)<\max(\frac{1}{2},1-\frac{1}{d_L})$. Therefore, the above approach by factorization of $L$-function usually gives universality for a wider strip than the direct proof of joint universality for given $L$-functions.  For example, following \cite[Section 2]{M} let us consider a normalized holomorphic Hecke eigen cusp form $f$, the automorphic $L$-function $L(s,f)$ and the symmetric square $L$-function $L(s,\textup{sym}^2f)$ (for the definition see \cite[Eq.  (2.4) and (2.6)]{M}). It is known that the Rankin-Selberg $L$-function $L(s,f\otimes g)$ is a function of degree $4$ and it is universal (see \cite{Ma} and \cite{Na}) in the strip $3/4<\sigma<1$. However, one can easily show that
\[
L(s,f\otimes f) = \zeta(s)L(s,\textup{sym}^2f).
\]
and it is known that the abscissa of the mean-square of $L(s,\textup{sym}^2f)$ is at most $2/3$. Therefore, using \cite[Eq. (3.8)]{M}, we obtain joint universality for $\zeta(s)$ and $L(s,\textup{sym}^2f)$ in the strip. It implies joint universality for the Riemann zeta function and the automorphic $L$-function in the wider strip $2/3<\sigma<1$. 

It turns out that the theory of the Rankin-Selberg convolution delivers more examples for application of our Theorem \ref{th:main}. It is known that the Rankin-Selberg convolution and the Rankin-Selberg square are  powerful tools to investigate the existence of prime number theorem for automorphic $L$-functions. For example, Iwaniec and Kowalski \cite[Section 5]{IK} showed that the existence of the Rankin-Selberg $L$-function $L(s,f\otimes g)$ implies the existence of its zero-free region, provided some additional conditions related to automorphic forms $f$, $g$ hold. Moreover, they proved that zero-free region for automorphic $L$-function gives prime number theorem (see \cite[Theorem 5.13]{IK}). Note that the coefficients $\lambda_{f\otimes g}(p)$ of the Rankin-Selberg convolution $L(s,f\otimes g)$ satisfy $\lambda_{f\otimes g}(p) = \lambda_f(p)\lambda_g(p)$, where $\lambda_f(p)$ and $\lambda_g(p)$ are coefficients of automorphic $L$-functions $L(s,f)$ and $L(s,g)$ associated to automorphic forms $f$ and $g$, respectively. In particular, the coefficients of the Rankin-Selberg square $L(s,f\otimes\overline{f})$ satisfy $\lambda_{f\otimes \overline{f}}(p) = |\lambda_f(p)|^2$. Therefore, we obtain that the existence of the Rankin-Selberg convolution and the Rankin-Selberg square implies the strong version of Selberg's conjecture, namely
\begin{align}
\sum_{p\leq x}|\lambda_f(p)|^2 &= \kappa_f \li x + O\left(xe^{-c\sqrt{\log x}}\right)\qquad (\kappa_f>0),\label{eq:autoA}\\
\sum_{p\leq x}\lambda_f(p)\overline{\lambda_g(p)} &= O\left(xe^{-c\sqrt{\log x}}\right)\qquad (f\ne g).\label{eq:autoB}
\end{align}

The existence of the Rankin-Selberg convolution and square as well as zero-free region are well investigated for many automorphic $L$-functions. For example, it is known (see \cite[Theorem 5.41]{IK}) that $L(s,f\otimes g)$ has no zero in the region  \eqref{eq:zeroFree} except possibly a one simple zero $\beta<1$, provided $f$ and $g$ are classical primitive modular forms. Hence, we get that \eqref{eq:autoA} and \eqref{eq:autoB} hold and we get joint universality for any collection of automorphic $L$-function $L(s,f_1),\ldots,L(s,f_m)$ with distinct classical primitive modular forms, provided they belong to $\Sclass$. 

Similarly, the result of Liu and Ye \cite[Theorem 2.3]{LY} implies joint universality for a quite general automorphic $L$-functions $L(s,\pi_j)$, $j=1,2,\ldots,m$, associated to irreducible unitary cuspidal representation $\pi_j$ of $GL_m(\mathbb{Q}_A)$ satisfying $\pi_i\not\cong\pi_j\otimes |\det|^{i\tau}$ for any $\tau\in\mathbb{R}$, provided they are elements of the Selberg class. 

It should be noted that, most likely, the Selberg class consists only of automorphic $L$-functions in which case it is widely believed and known for many examples that instead of Selberg's Conjecture \ref{conj:Selberg} we can expect \eqref{eq:autoA} and \eqref{eq:autoB}. It means that probably there is no example of $L$-functions from Selberg class satisfying Selberg's Conjecture \ref{conj:Selberg}, which do not fulfill \eqref{eq:SelbergSA} and \eqref{eq:SelbergSB}. Thus, we conjecture that we do not loss of generality by assuming the stronger version of Selberg's conjecture.

\end{document}